\newtheorem{theorem}{Theorem}[section]
\newtheorem{pro}[theorem]{Proposition}
\newtheorem{cor}[theorem]{Corollary}
\newtheorem{lemma}[theorem]{Lemma}
\theoremstyle{definition}
\theoremstyle{remark}
\numberwithin{equation}{section}
\begin{document}

\title[self-reciprocal polynomials and real binary forms ]{ Discriminants of a class of self-inversive polynomials and real binary forms  }

%    Information for first author
\author{ Keisuke Uchimura}
%    Address of record for the research reported here%
\address{Department of Mathematics, Tokai University,  Hiratsuka, 259-1292, Japan}
\email{uchimura@tokai-u.jp. }

%    General info
\subjclass[2010]{Primary 26C10, 30C10, 30C15; Secondary 37F45, 58K35}

\keywords{Self-inversive polynomials. Real binary forms. Discriminants.}

\begin{abstract}\hspace{0cm}

A  class of self-inversive polynomials includes all the  self-reciprocal polynomials.  
Let A denote the set of all self-reciprocal polynomials with  n+1 coefficients.  Let B   denote the set of certain  self-inversive and non self-reciprocal  polynomials  with  n+1 coefficients for odd  n.  Let C denote the set of real binary n-ic forms.  Then there exist  a bijection between A and C and  another  bijection between B and C.  Let f be a monic polynomial in A and g be the corresponding  polynomial in C. 
If the reading coefficient of g is not zero, then the discriminant of g is expressed by the determinant of a matrix of type (n, n).  Any element of the matrix is a polynomial in the coefficients of f with integer coefficients.  The same  holds for monic polynomials in B. 
\end{abstract}

\maketitle
%%%%% Section 1
\section{Introduction}
%%%%%%  1.1
\subsection{Self-inversive polynomials}\hspace{0cm}
 
 A polynomial  
\[P(z) = a_0z^n + a_{1}z^{n-1} +  \cdot \cdot \cdot   + a_n\] is said to be a {\it self-inversive polynomial} of degree \(n\) if it satisfies \(a_0  \ne 0\) and \(P(z) = \mu P^*(z)\) where \(\mid \mu \mid = 1\) 
and 
\[P^*(z) = z^n \overline{P(1/\bar z)} = \bar a_nz^n + \bar a_{n-1}z^{n-1} + \cdot \cdot \cdot   + \bar a_n,\]
or equivalently,  
%%%%equation 1.1
\begin{equation}  
a_0 \ne 0 \enskip \mbox{ and}  \enskip a_k = \mu\bar a_{n-k}   \enskip \mbox{ for every}  
\enskip 0 \le k \le n.  
\end{equation} 

See e.g. Marden \cite{M}.

In particular, if  \(P(z) = P^*(z), \quad P(z)\) is called be  {\it self-reciprocal}.  In the literature \cite{Sh} such a polynomial is called a  self-inversive polynomial.

The zeros of  a  self-inversive polynomial are on or symmetric in the unit circle \(C : \mid z \mid = 1\).   

In 1922, Cohn \cite{Co} proved that a polynomial \(P(z)\) has  all of its zeros on \(C\) if and only if it is self-inversive and its derivative \(P'(z)\) has all its zeros in the closed unit disk \(\mid z \mid \le 1\). 

Chen \cite{Ch} found another necessary and sufficient condition for which all zeros of  a self-inversive polynomial lie on \(C\).

%%%%%1.2
\subsection{Real binary forms in terms of the complex variable}\hspace{0cm} 

Zeeman \cite{Z} described a real binary cubic form
\[g_2(x,y) = ax^3 + bx^2y + cxy^2 + dy^3, \quad  (a, b, c, d  \in {\mathbb R})\]
in terms of the complex variable \(z = x + iy\)  and showed that \(g_2\) can be expressed uniquely as 
\[g_2(x,y) = Re(\alpha z^3 + \beta z^2\bar z) , \quad  \alpha, \beta,   \in {\mathbb C}.\]

Poston and Stewart \cite{PoS} applied this method to the quartic forms.  A real binary quartic form
\[g_3(x,y) = ax^4 + bx^3y + cx^3y^2 + dxy^3 + ey^4\]
can be expressed uniquely as
\[g_3(x,y) = Re(\alpha z^4 + \beta z^3\bar z + \gamma z^2\bar z^2), \quad  \alpha, \beta   \in {\mathbb C},\quad \gamma  \in {\mathbb R}.\]

In this paper we will study relation between real binary forms and certain self-inversive polynomials via the terms of the complex variable.  Such self-inversive polynomials appear in several branches of mathematics.
%%%%%%1.3
\subsection{Monic self-inversive polynomials}\hspace{0cm}

Gongopadhyay and Parker \cite{GP} and Gongopadhyay, Parker and Parsad \cite{GPP}  classified  the dynamical action in SU(p,q)  using the coefficients of their characteristic polynomial.  In the case \(p+q = 4\),  the characteristic polynomial is 
\[\chi(X) = X^4 - \tau X^3 + \sigma X^2 - \bar\tau X +1, \quad \tau \in {\mathbb C},\quad \sigma  \in {\mathbb R}.\]
The locus where the resultant  \(R(\chi, \chi') = 0\)  was studied by Poston and Stewart \cite{PoS}.  The locus was named the holy grail.  The characteristic polynomial  \(\chi(X)\) is a monic self-reciprocal polynomial.

Uchimura \cite{U2}  studied the dynamics of holomorphic endomorphisms \(P_{A_3}^d\)  on   ${\mathbb P}^3({\mathbb C})$ which is related to a complex Lie algebra of type \(A_3\).  The set of the critical values of \(P_{A_3}^d\)  restricted to a real three-dimensional subspace of    ${\mathbb C}^3$ was proved to be equal to the holy grail.   Uchimura \cite{U2}  studied the relation between real binary quartic forms  and monic self-reciprocal polynomials of degree four.  In \cite{U1},  monic self-inversive polynomials of degree three were studied.

Marden \cite{M} stated that in a linear difference equation with constant coefficients, the requirement for a stable solution is that all the zeros of the characteristic polynomial lie in the unit circle.  On the other hand,  the map \(P_{A_3}^d\)  causes chaos in the region  \(W_3\)  that corresponds to the case that all the zeros of its self-inversive   polynomial lie on the unit circle.

Peterson and Sinclair \cite{PeS} and Sinclair and Vaaler \cite{SV} studied monic  self-reciprocal polynomials.  Those polynomials have appeared in the study of random polynomials, random matrix theory and number theory.  

%%%%%%1.4
\subsection{Summary of results }\hspace{0cm}

We consider complex binary forms expressed by
\[P(z,w) =  \sum_{j=0}^{n+1}\zeta_jz^{n+1-j}w^j.\]
Let \({\mathcal A}_n \) denote the set of the   self-inversive forms \(P(z, w)\)  satisfying that  (\(n\) is odd and \(\mu = 1\)) or   (\(n\) is even  and \(\mu = -1\)) ,  where  \(\mu\) is the constant in (1.1).   That is,
\[{\mathcal A}_n : = \{P(z,w) : \zeta_j = (-1)^{n+1}\overline{\zeta}_{n+1-j}, \enskip \mbox{for}\enskip j = 0, 1, \dots , n+1\}.\]
Let \({\mathcal B}_n \) denote the set of the   self-inversive forms \(P(z, w)\)  satisfying that  (\(n\) is odd and \(\mu = -1\)) or   (\(n\) is even  and \(\mu = 1\)) .  That is, 
\[{\mathcal B}_n : = \{P(z,w) : \zeta_j = (-1)^{n}\overline{\zeta}_{n+1-j}, \enskip \mbox{for}\enskip j = 0, 1, \dots , n+1\}.\]

We also consider real binary forms
\[g(X,Y) =  \sum_{j=0}^{n+1}a_jX^{n+1-j}Y^j, \enskip a_0, a_1, \dots , a_{n+1} \in \mathbb{R}.\]
Let \({\mathcal F}_n \) denote the set of the real binary forms \(g(X, Y)\).

We will show that there exist an isomorphism from  \({\mathcal A}_n \)   to \({\mathcal F}_n \) and  another  isomorphism from  \({\mathcal B}_n \)   to \({\mathcal F}_n \).

Let  \({\mathcal {MA}}_n \) denote the set of  monic self-reciprocal forms in \({\mathcal A}_n \) satisfying  \(\zeta_0 =  1\).
Let  \({\mathcal {MF}}_n \) denote the set of the corresponding   forms in \({\mathcal F}_n \).  We will show that for any form \(g(X,Y) \) in  \({\mathcal {MF}}_n \) satisfying \(a_0 \ne 0\), the discriminant of the polynomial \(g(X,1)\) is expressed by the determinant of a \((n+1)\times (n+1)\) matrix.  Any element of the matrix is a polynomial in \(\zeta_1,  \zeta_{2} , \dots , \zeta_n\) with integer coefficients. The matrix is a slightly modified version of the matrix introduced by \cite{EL}.  Their matrices are related to generalized Chebyshev polynomials of the first kind in several complex variables.

Let  \({\mathcal {MB}}_n \) denote the set of monic self-reciprocal forms in \({\mathcal {B}}_n \) for  even \(n\).  The same result holds for \({\mathcal {MB}}_n \).

%%%%% Section 2
\section{Results and proofs}
%%%%%%  2.1
\subsection{The spaces \({\mathcal A}_n \)  and  \({\mathcal F}_n \) }\hspace{0cm}

We consider self-inversive forms in the form

%%%%equation 2.1
\begin{equation} 
 f_{n}(T, U) = \zeta_0T^{n+1} - \zeta_1T^{n}U + \zeta_2T^{n-1}U^2 + \cdot \cdot \cdot + (-1)^n\zeta_nTU^{n} + (-1)^{n+1}\zeta_{n+1}U^{n+1}.
\end{equation}
where 
%%%%equation 2.2
\begin{equation} 
\zeta_k \in  {\mathbb C}, \quad \zeta_k = {\bar \zeta}_{n+1-k}, \quad k = 0,  \cdot \cdot \cdot , n+1.
\end{equation}
 Let \({\mathcal A}_n \) denote the space of these self-inversive forms.  Note that if \(n\) is odd,  then 
\(f_{n}(T, U)\) is self-reciprocal and that if \(n\) is even,  then 
\(f_{n}(T, -U)\) is self-reciprocal.  The space  \({\mathcal A}_n \)  is related to a complex Lie algebra of type \(A_n \) .   See \cite{U1} and \cite{U2}.

We also consider binary forms in the forms   
%%%%equation 2.3
\begin{equation} 
\begin{split}
 g_{n}(X, Y) = a_0X^{n+1} + a_1X^{n}Y +  \cdot \cdot \cdot + a_nXY^{n} + a_{n+1}Y^{n+1}, \\
\quad \mbox{with} \quad a_0, a_1,  \cdot \cdot \cdot , a_{n+1} \in {\mathbb R}.
\end{split}
\end{equation}
Let   \({\mathcal F}_n \) denote the space of these forms.  We do not assume that \(a_0 \ne 0\).
We  show that  \({\mathcal A}_n \) is isomorphic to  \({\mathcal F}_n \).

%%%%%%%%%%%%%%%%%%%%%%%%%%%%%%%  proposition 2.1
\begin{pro} \label{pro:2.1}
There exists an isomorphism  \( \Phi_n\) from  \({\mathcal A}_n \) onto  \({\mathcal F}_n \).
\end{pro}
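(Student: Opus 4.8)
The plan is to build $\Phi_n$ from the substitution $z = X + iY$ used by Zeeman and by Poston--Stewart, taking $T = X+iY$ and $U = X - iY$. Explicitly, I would set
\[
\Phi_n(f_n)(X,Y) := i^{-(n+1)}\, f_n(X+iY,\; X-iY),
\]
which is a homogeneous polynomial of degree $n+1$ in the real variables $X,Y$. The substitution $(X,Y) \mapsto (T,U) = (X+iY,\, X-iY)$ is an invertible $\mathbb{C}$-linear change of variables, so this defines a degree-preserving map on forms; the factor $i^{-(n+1)}$ is inserted to make the output real, as explained below. There are then three things to verify: that $\Phi_n(f_n)$ has real coefficients (so that it lies in $\mathcal{F}_n$), that $\Phi_n$ is $\mathbb{R}$-linear, and that it is a bijection.

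For the reality, I would substitute $z = X+iY$ into (2.1) and conjugate term by term. Writing the coefficient of $T^{n+1-j}U^j$ as $(-1)^j \zeta_j$ and using the self-inversive relation $\zeta_j = \bar\zeta_{n+1-j}$ from (2.2), a reindexing $m = n+1-j$ gives
\[
\overline{f_n(z,\bar z)} = (-1)^{n+1}\, f_n(z,\bar z).
\]
Hence $f_n(z,\bar z)$ is real when $n$ is odd and purely imaginary when $n$ is even, and in either case the identity $\overline{i^{-(n+1)}} = i^{n+1} = (-1)^{n+1}\, i^{-(n+1)}$ shows that $i^{-(n+1)} f_n(z,\bar z)$ equals its own conjugate. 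A polynomial in $X,Y$ that agrees with its conjugate for all real $X,Y$ has real coefficients, so $\Phi_n(f_n) \in \mathcal{F}_n$.

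Linearity is immediate once one observes that the constraint $\zeta_j = \bar\zeta_{n+1-j}$ is preserved under addition and under multiplication by real scalars (but not by $i$), so $\mathcal{A}_n$ is a real vector space and the substitution depends $\mathbb{R}$-linearly on the coefficients $\zeta_j$. For bijectivity I would first count dimensions: the relations in (2.2) pair the index $j$ with $n+1-j$, leaving a real middle coefficient when $n$ is odd and no fixed index when $n$ is even, so in both parities $\dim_{\mathbb{R}} \mathcal{A}_n = n+2 = \dim_{\mathbb{R}} \mathcal{F}_n$. It then suffices to prove injectivity: if $\Phi_n(f_n) \equiv 0$, then $f_n(X+iY, X-iY)$ vanishes for all real $X,Y$, hence vanishes identically in $\mathbb{C}[X,Y]$; since $(X,Y)\mapsto(X+iY,X-iY)$ is an invertible linear change of variables, $f_n \equiv 0$ in $\mathbb{C}[T,U]$, forcing every $\zeta_j = 0$. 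Thus $\Phi_n$ is an injective $\mathbb{R}$-linear map between real vector spaces of equal finite dimension, hence an isomorphism. The main obstacle I anticipate is bookkeeping rather than conceptual: getting the signs and the parity-dependent normalization exactly right in the reality computation, and making sure the single formula handles both the self-reciprocal ($n$ odd) and the $n$ even cases uniformly. As an alternative to the dimension-plus-injectivity argument one could exhibit the inverse directly by substituting $X = (T+U)/2$ and $Y = (T-U)/(2i)$ and checking that the resulting form satisfies (2.2), but this is more computational and the symmetry check is essentially the reverse of the reality computation above.
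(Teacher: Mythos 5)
Your proposal is correct, and its core construction agrees with the paper's up to a harmless real normalization: the paper sets $T = i(X+iY)$, $U = i(X-iY)$, which by homogeneity multiplies $f_n(X+iY,X-iY)$ by $i^{n+1}$ rather than your $i^{-(n+1)}$, so your $\Phi_n$ is $(-1)^{n+1}$ times the paper's; both land in $\mathcal{F}_n$ and both are isomorphisms. The reality check is essentially the same computation in both treatments (the paper phrases it as $\overline{f_n(T,-\overline T)} = f_n(T,-\overline T)$, you as $\overline{f_n(z,\bar z)} = (-1)^{n+1} f_n(z,\bar z)$ followed by normalization). Where you genuinely diverge is in proving bijectivity. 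The paper gives two arguments: the first writes down the inverse substitution $X = -\tfrac i2(T+U)$, $Y = -\tfrac12(T-U)$ and verifies by direct expansion that the pullback of any real form satisfies the self-inversive symmetry $f(T,U) = (-1)^{n+1}\hat f(U,T)$; the second (Lemmas 2.3--2.5) expresses $f_n$ as $2\,\mathrm{Re}$ of a $\bar\partial$-graded sum, proves linear independence of the real and imaginary parts $R_j, I_j$ of $Z^{n+1-j}\overline Z^{j}$ via orthogonality on the unit disk, and deduces an invertible coefficient matrix. You instead count real dimensions ($\dim_{\mathbb R}\mathcal A_n = n+2 = \dim_{\mathbb R}\mathcal F_n$, with the correct parity bookkeeping for the middle coefficient) and prove injectivity from the fact that a polynomial vanishing on all of $\mathbb R^2$ vanishes identically, together with invertibility of the linear change of variables. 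This buys you surjectivity for free and avoids both the explicit symmetry verification and the orthogonality lemma; the trade-off is that the paper's explicit inverse and its Lemmas 2.3--2.5 are reused later (in Propositions 2.2 and 2.7, Lemma 2.8, and the discriminant computations), so the paper's longer route is not wasted effort in context. Your argument is a complete and somewhat more economical proof of the proposition as stated.
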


We define the map  \( \Phi_n\) as follows.  Clearly \(\overline{ f_n(T, -{\overline T})} = f_n(T, -{\overline T}).\)  Then we set

%%%%equation 2.4
\begin{equation} 
T = i(X+iY) \quad  \mbox{and} \quad    U = i(X-iY) .  
\end{equation}
Hence \(f_n  (i(X+iY),  i(X-iY)) \)  is a polynomial in variables \(X\) and \(Y\) with real coefficients.  So  \(f_n  (i(X+iY) , i(X-iY)) \)   is written as \(g_n(X,Y)\) in (2.3).  The image of a self-inversive form (2.1) under the map  \( \Phi_n\)  is the binary form \(g_n(X, Y)\). The coefficients \(a_k\) are given by the following proposition.

%%%%proposition 2.2
 \begin{pro} \label{pro:2.2}
(1) If \(n\) is even, then 
\[a_{2h+1} = (-1)^{n/2+h+1} {{n+1}\choose{2h+1}}(\zeta_0 + \zeta_{n+1})\]
\[+ (-1)^{n/2+1} \sum_{k=1}^{n/2}\left[(-1)^k\Bigg\{\sum_{j=0}^{h}(-1)^j{{k}\choose{h-j}}{{n+1-2k}\choose{2j+1}}\Bigg\}(\zeta_k + \zeta_{n+1-k})\right],\]
and 
\[a_{2h} = (-1)^{n/2+h} {{n+1}\choose{2h}}(\zeta_0 - \zeta_{n+1})\sqrt{-1}\]
\[+ (-1)^{n/2} \sum_{k=1}^{n/2}\left[(-1)^k\Bigg\{\sum_{j=0}^{h}(-1)^j{{k}\choose{h-j}}{{n+1-2k}\choose{2j}}\Bigg\}(\zeta_k - \zeta_{n+1-k})\sqrt{-1}\right],\]
  \[\mbox{where} \quad h = 0, 1, \dots ,  n/2.\qquad \qquad \qquad \qquad\qquad \qquad \qquad \qquad\qquad \qquad \qquad \qquad\]
(2) If \(n\) is odd, then 
\[a_{2h+1} = (-1)^{(n+1)/2+h} {{n+1}\choose{2h+1}}(\zeta_0 - \zeta_{n+1})\sqrt{-1}\]
\[+ (-1)^{(n+1)/2} \sum_{k=1}^{(n-1)/2}\left[(-1)^k\Bigg\{\sum_{j=0}^{h}(-1)^j{{k}\choose{h-j}}{{n+1-2k}\choose{2j+1}}\Bigg\}(\zeta_k - \zeta_{n+1-k})\sqrt{-1}\right],\]
  \[\mbox{where} \quad h = 0, 1, \dots ,  (n-1)/2,\qquad \qquad \qquad \qquad\qquad \qquad \qquad \qquad\qquad \qquad \qquad \qquad\] 
and
\[a_{2h} = (-1)^{(n+1)/2+h} {{n+1}\choose{2h}}(\zeta_0 + \zeta_{n+1}) + {{(n+1)/2}\choose{h}}\zeta_{(n+1)/2}\]
\[+ (-1)^{(n+1)/2} \sum_{k=1}^{(n-1)/2}\left[(-1)^k\Bigg\{\sum_{j=0}^{h}(-1)^j{{k}\choose{h-j}}{{n+1-2k}\choose{2j}}\Bigg\}(\zeta_k + \zeta_{n+1-k})\right],\]
  \[\mbox{where} \quad h = 0, 1, \dots ,  (n+1)/2.\qquad \qquad \qquad \qquad\qquad \qquad \qquad \qquad\qquad \qquad \qquad \qquad\]

Here we use conventions 
\[{k \choose 0} = 1, \enskip \mbox{and } \enskip {k \choose j} = 0,   \enskip \mbox{if } \enskip j > k \geq 0.\]
\end{pro}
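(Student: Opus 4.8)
The plan is to substitute the change of variables directly into the monomials of $f_n$ and to read off the real coefficients by the binomial theorem. Writing $z = X + iY$, the substitution $T = i(X+iY)$, $U = i(X-iY)$ gives $T = iz$ and $U = i\bar z$, so that a general monomial becomes $T^{n+1-k}U^k = i^{n+1}z^{n+1-k}\bar z^k$ and hence
\[
 f_n(T,U) = i^{n+1}\sum_{k=0}^{n+1}(-1)^k\zeta_k\, z^{n+1-k}\bar z^k .
\]
Because $\zeta_{n+1-k} = \bar\zeta_k$, the natural first step is to pair the $k$-th and $(n+1-k)$-th terms. For $k \le (n+1)/2$ one has $z^{n+1-k}\bar z^k = \abs{z}^{2k}z^{N}$ with $N := n+1-2k$, and the paired contribution is
\[
 \abs{z}^{2k}\bigl[(-1)^k\zeta_k z^N + (-1)^{n+1-k}\bar\zeta_k\bar z^N\bigr].
\]
Since $\abs{z}^{2k} = (X^2+Y^2)^k$ is real, the bracket is, up to the factor $i^{n+1}$, either $2(-1)^k\mathrm{Re}(\zeta_k z^N)$ when $n$ is odd or $2(-1)^k i\,\mathrm{Im}(\zeta_k z^N)$ when $n$ is even; in each case multiplication by $i^{n+1}$ renders the term real, which is exactly the remark $\overline{f_n(T,-\bar T)} = f_n(T,-\bar T)$ made before the statement.

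Next I would expand the three real factors by the binomial theorem, namely $(X^2+Y^2)^k = \sum_{l}\binom{k}{l}X^{2k-2l}Y^{2l}$, together with $\mathrm{Re}(z^N) = \sum_j(-1)^j\binom{N}{2j}X^{N-2j}Y^{2j}$ and $\mathrm{Im}(z^N) = \sum_j(-1)^j\binom{N}{2j+1}X^{N-2j-1}Y^{2j+1}$. Collecting the coefficient of $X^{n+1-m}Y^m$ then amounts to a single convolution: for an even power $m = 2h$ the factor $\mathrm{Re}(z^N)$ contributes $\sum_{j=0}^h(-1)^j\binom{k}{h-j}\binom{n+1-2k}{2j}$, and for an odd power $m=2h+1$ the factor $\mathrm{Im}(z^N)$ contributes $\sum_{j=0}^h(-1)^j\binom{k}{h-j}\binom{n+1-2k}{2j+1}$, where the constraint $l = h-j \ge 0$ fixes the upper summation limit and the convention $\binom{k}{h-j}=0$ for $h-j>k$ handles the remaining range. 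These are precisely the inner sums appearing in the proposition.

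To produce the factors $\zeta_k \pm \zeta_{n+1-k}$ I would write $\zeta_k = \alpha + i\beta$, so that $\mathrm{Re}(\zeta_k z^N) = \alpha\,\mathrm{Re}(z^N) - \beta\,\mathrm{Im}(z^N)$ and $\mathrm{Im}(\zeta_k z^N) = \alpha\,\mathrm{Im}(z^N) + \beta\,\mathrm{Re}(z^N)$, and then use $2\alpha = \zeta_k + \zeta_{n+1-k}$ and $2i\beta = \zeta_k - \zeta_{n+1-k}$. Tracing which of $\mathrm{Re}(z^N)$ or $\mathrm{Im}(z^N)$ feeds a given coefficient determines whether $\zeta_k+\zeta_{n+1-k}$ or $(\zeta_k-\zeta_{n+1-k})\sqrt{-1}$ appears, and this is exactly where the four cases (the two parities of $n$ against the two parities of the coefficient index) separate. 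The overall sign on each term is then assembled from $i^{n+1}$, the factor $(-1)^k$ from the monomial, and the $\tfrac12$ coming out of $\mathrm{Re}/\mathrm{Im}$; matching these against the stated prefactors $(-1)^{n/2+h+1}$, $(-1)^{(n+1)/2+h}$, and so on is the bookkeeping to be carried out carefully.

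Finally the two self-paired indices must be treated separately. The pair $k=0$, $k=n+1$ (with $\abs{z}^0=1$, $N=n+1$) yields the leading terms carrying $\binom{n+1}{m}$ and $\zeta_0\pm\zeta_{n+1}$, while for odd $n$ the central index $k=(n+1)/2$ gives $N=0$ and a real $\zeta_{(n+1)/2}$, contributing $(X^2+Y^2)^{(n+1)/2} = \sum_h\binom{(n+1)/2}{h}X^{n+1-2h}Y^{2h}$ and hence the isolated term $\binom{(n+1)/2}{h}\zeta_{(n+1)/2}$ in $a_{2h}$; one checks that here $i^{n+1}$ and $(-1)^{(n+1)/2}$ combine to $+1$, giving the correct sign. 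I expect the main obstacle to be not any single identity but this disciplined sign-and-parity bookkeeping across all four cases, since the entire combinatorial content reduces to the one binomial convolution displayed above.
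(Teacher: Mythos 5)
Your proposal follows essentially the same route as the paper's proof: pair the $k$-th and $(n+1-k)$-th terms into a real expression $(X^2+Y^2)^k\{\zeta_k(X+iY)^{N}\pm\zeta_{n+1-k}(X-iY)^{N}\}$, expand by the binomial theorem, and read off each coefficient as the convolution $\sum_j(-1)^j\binom{k}{h-j}\binom{n+1-2k}{2j}$ or its odd-index analogue, with the $k=0,\,n+1$ pair and (for odd $n$) the central index treated separately. The paper compresses all of this into ``easy calculations reveal,'' so your version is if anything more explicit, and the remaining sign bookkeeping you defer is exactly the part the paper also leaves to the reader.
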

%%%%%%%%%%%%%%%%%%%%% proof of proposition 2.2
\begin{proof} [Proof of Proposition 2.2.]\quad %%case 1
 Case 1 :  \quad \(n\) is even.\\
 We consider pairs of terms of 
 \(f_n(i(X+iY), i(X+iY))\).  Set
\[Q_k : = i^{n+1}(-1)^k(X^2+Y^2)^k\{\zeta_k (X+iY)^{n+1-2k}-\zeta_{n+1-k} (X-iY)^{n+1-2k}\},\]
\[k = 1, \dots , [n/2].\]
  Easy calculations reveal that the coefficient of  \(X^{n-2h}Y^{2h+1}\) in \(Q_k\) is 
\[ (-1)^{n/2+k+1} \Bigg\{\sum_{j=0}^{h}(-1)^j{{k}\choose{h-j}}{{n+1-2k}\choose{2j+1}}\Bigg\}(\zeta_k + \zeta_{n+1-k})\]
and  the coefficient of  \(X^{n+1-2h}Y^{2h}\) in \(Q_k\) is
\[ (-1)^{n/2+k} \Bigg\{\sum_{j=0}^{h}(-1)^j{{k}\choose{h-j}}{{n+1-2k}\choose{2j}}\Bigg\}(\zeta_k - \zeta_{n+1-k})\sqrt{-1}.\]
To get \(a_{2h+1}\) and \(a_{2h}\),  we sum the coefficients of  \(X^{n-2h}Y^{2h+1}\)   and  \(X^{n+1-2h}Y^{2h}\)  \enskip in \(Q_k\)  over       \(k = 1, \dots ,  n/2\) \enskip and those coefficients of the pair of terms \enskip \(\zeta_0 (X+iY)^{n+1}-\zeta_{n+1} (X-iY)^{n+1}\),  respectively.\\
%%case 2
 Case 2 :  \quad 
\(n\) is odd.  By the similar method, we can prove the assertion. 
\end{proof}

Conversely we consider the inverse of the map \(\Phi_n\).  From (2.4), we have
%%%%equation 2.5 
\begin{equation} 
X = -\frac i2(T+U) \quad \mbox{and} \quad Y = -\frac 12(T - U).
\end{equation}
Then we need to show
%%%%equation  
\begin{equation*} 
g_n ( -\frac i2(T+U), -\frac 12(T - U)) \in {\mathcal A}_n.
\end{equation*}
We set
%%%%equation  
\begin{equation*} 
f(T, U) : = g_n ( -\frac i2(T+U), -\frac 12(T - U)).
\end{equation*}
Then it suffices to prove the equality
%%%%equation 2.6 
\begin{equation} 
f(T, U) = (-1)^{n+1}\hat{f}(U, T)  ,
\end{equation}
where, for a form
\[f(T, U)  =  \sum_{j=0}^{n+1}\zeta_jT^{n+1-j}U^j,\]
we define the form \(\hat{f}(T, U)\) by  
\[\hat{f}(T, U)  =  \sum_{j=0}^{n+1}\bar{\zeta}_jT^{n+1-j}U^j.\]

To prove this we note that 
%%%%equation 
\begin{equation*} 
\begin{split}
 f(U, T)
 = a_0(-\frac i2(U+T))^{n+1} +  a_1(-\frac i2(U+T))^{n}(-\frac 12(U-T)) + \dots +  a_{n+1}(-\frac12(U-T))^{n+1}\\
= a_0(-\frac i2(U+T))^{n+1} -  a_1(-\frac i2(U+T))^{n}(-\frac 12(T-U)) + \dots +  (-1)^{n+1}a_{n+1}(-\frac12(T-U))^{n+1}.
\end{split}
\end{equation*}
\[ \hat{f}(U, T)
 =  (-1)^{n+1}a_0(-\frac i2(U+T))^{n+1} +   (-1)^{n+1}a_1(-\frac i2(U+T))^{n}(-\frac 12(T-U)) \]
\[+ \dots +   (-1)^{n+1}a_{n+1}(-\frac12(T-U))^{n+1} =  (-1)^{n+1}f(T, U).\]
Then we have (2.6) and so \(\Phi_n\) is a bijection.  Since  \(\Phi_n\) is a linear map, we have
 \[\Phi_n(f_n + f'_n) =  \Phi_n(f_n) +  \Phi_n(f'_n), \enskip\mbox{where} \enskip f_n,  f'_n \in {\mathcal A}_n.\]
Hence \(\Phi_n\)  is an isomorphism.\\
\\

Next we give an alternative proof of Proposition 2.1.  This is useful in the sections below.

Set \(Z = X + iY\).  Let
\[f_n(Z) := f_n(iZ, i\overline{Z}) = f_n(i(X+iY), i(X-iY)).\]
Then \(f_n(Z)\)  is written as follows.
\\
%%%%%%%%%%%%%%lemma 2.3
\begin{lemma} \label{lemma:2.3}
If  \(n\) is even, then 
%%%%equation 2.7
\begin{equation} 
 f_{n}(Z) = 2Re\enskip i^{n+1}(\zeta_0Z^{n+1} - \zeta_1Z^{n}\overline{Z} + \dots +  (-1)^{n/2}\zeta_{n/2}Z^{(n+2)/2}\overline{Z}^{n/2}  ).
\end{equation}
If  \(n\) is odd, then 
%%%%equation 
\begin{equation*} 
\begin{split}
 f_{n}(Z) = 2Re \enskip i^{n+1}(\zeta_0Z^{n+1} - \zeta_1Z^{n}\overline{Z} + \dots +   (-1)^{(n-1)/2}\zeta_{(n-1)/2}Z^{(n+3)/2}\overline{Z}^{(n-1)/2} \\
+ \frac { (-1)^{(n+1)/2}}2 \zeta_{(n+1)/2}(Z\overline{Z})^{(n+1)/2} ).
\end{split}
\end{equation*}

\end{lemma}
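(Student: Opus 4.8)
The plan is to substitute $T=iZ$ and $U=i\overline{Z}$ directly into the defining expansion (2.1) and to reorganize the resulting sum by pairing the $j$-th term with the $(n+1-j)$-th term. First I would rewrite (2.1) compactly as $f_n(T,U)=\sum_{j=0}^{n+1}(-1)^j\zeta_j T^{n+1-j}U^j$, so that after the substitution each monomial becomes $(iZ)^{n+1-j}(i\overline{Z})^{j}=i^{n+1}Z^{n+1-j}\overline{Z}^{j}$. Pulling out the common factor $i^{n+1}$ yields
\[
f_n(Z)=i^{n+1}\sum_{j=0}^{n+1}(-1)^j\zeta_j Z^{n+1-j}\overline{Z}^{j}=i^{n+1}\sum_{j=0}^{n+1}w_j,\qquad w_j:=(-1)^j\zeta_j Z^{n+1-j}\overline{Z}^{j}.
\]

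Next I would exploit the self-inversive relation (2.2), in the form $\zeta_{n+1-j}=\overline{\zeta}_j$, together with the elementary identity $\overline{Z^{n+1-j}\overline{Z}^{j}}=Z^{j}\overline{Z}^{n+1-j}$, to compute that swapping $j\leftrightarrow n+1-j$ conjugates the monomial; a short sign count gives $w_{n+1-j}=(-1)^{n+1}\overline{w}_j$. The algebraic heart of the argument is then the identity $i^{n+1}\bigl(w+(-1)^{n+1}\overline{w}\bigr)=2\,\mathrm{Re}\,(i^{n+1}w)$, which follows immediately from $\overline{i^{n+1}}=(-i)^{n+1}=(-1)^{n+1}i^{n+1}$. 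This is precisely what turns each conjugate pair $w_j+w_{n+1-j}=w_j+(-1)^{n+1}\overline{w}_j$ into a real part $2\,\mathrm{Re}\,(i^{n+1}w_j)$.

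I would then split according to the parity of $n$. When $n$ is even, $n+1$ is odd, the equation $j=n+1-j$ has no solution, and the index set $\{0,\dots,n+1\}$ breaks into the disjoint pairs $\{j,\,n+1-j\}$ with $j=0,\dots,n/2$; summing the pair contributions gives
\[
f_n(Z)=2\,\mathrm{Re}\,i^{n+1}\sum_{j=0}^{n/2}w_j,
\]
which is exactly (2.8) once the $w_j$ are written out and one checks that the top pair $j=n/2$ contributes the monomial $Z^{(n+2)/2}\overline{Z}^{n/2}$. When $n$ is odd, $n+1$ is even and there is a single self-paired middle index $j=(n+1)/2$, with all other indices pairing up as before. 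Here $w_{(n+1)/2}=(-1)^{(n+1)/2}\zeta_{(n+1)/2}(Z\overline{Z})^{(n+1)/2}$; since $\zeta_{(n+1)/2}=\overline{\zeta}_{(n+1)/2}$ is real, $(Z\overline{Z})^{(n+1)/2}$ is real, and $i^{n+1}$ is real for even $n+1$, this middle term equals its own conjugate. Consequently $i^{n+1}w_{(n+1)/2}=2\,\mathrm{Re}\,\bigl(i^{n+1}\tfrac12 w_{(n+1)/2}\bigr)$, which is exactly why the statement carries the factor $\tfrac12$ on the middle monomial; combining the paired terms with this halved middle term produces the odd-$n$ formula.

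All steps are routine once the pairing identity $i^{n+1}(w+(-1)^{n+1}\overline{w})=2\,\mathrm{Re}\,(i^{n+1}w)$ is in hand, so the difficulty is essentially bookkeeping rather than conceptual. The main obstacle I anticipate is the odd-$n$ case: one must correctly isolate the self-conjugate middle term and recognize that $i^{n+1}$ is real there (forcing the weight $\tfrac12$), in contrast to the even case where $i^{n+1}$ is purely imaginary and no middle term occurs. Keeping the three sign sources $(-1)^j$, $(-1)^{n+1}$, and the powers of $i$ mutually consistent across the two parity cases is the only place where care is genuinely required.
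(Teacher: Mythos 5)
Your proof is correct and follows essentially the same route as the paper: both arguments pair the $j$-th term with the $(n+1-j)$-th term and use $\zeta_{n+1-j}=\overline{\zeta}_j$ to recognize each pair as a term plus its conjugate, i.e.\ twice a real part. The only cosmetic difference is that the paper forms the conjugate pairs in the variables $(T,-\overline{T})$ before substituting $T=iZ$, whereas you substitute first and then pair; you also write out the odd-$n$ middle term explicitly, which the paper dismisses as ``similar.''
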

%%%%%%%%%%%%%%%%%%%%% proof 
\begin{proof}\quad
%%case 1
 Case 1 :  \quad \(n\) is even.  Set
\[F(T, -\overline{T}) = \zeta_0T^{n+1} + \zeta_1T^{n}\overline{T} + \dots +  \zeta_{n/2}T^{(n+2)/2}\overline{T}^{n/2} .\]
Then
\[f_n(T, -\overline{T}) = F(T, -\overline{T}) +  \overline{F(T, -\overline{T})} = 2Re \enskip F(T, -\overline{T}).\]
Setting  \(T = iZ\), \enskip we have (2.7).\\
%%case 2
 Case 2 :  \quad 
\(n\) is odd.  The proof is similar.
\end{proof}
Let
\[w_j : = Z^{n+1-i}\overline{Z}^{j}, \quad w_j \in {\mathbb C}[X, Y], \quad j = 0, \dots , n+1.\]
We regard \(w_j\) as a polynomial in \( {\mathbb C}[X, Y]\).\\
Set 
\[{\mathcal S}_1: = \{w_j(X, Y) :  j = 0, \dots , n+1\}.\]
Probably, the following lemma may have been known.  But, here we give a proof of it.
%%%%%%%%%%%%%%lemma 2.4
\begin{lemma} \label{lemma:2.4}
(1)  The family \({\mathcal S}_1\) is linearly independent over \( {\mathbb C}\).\\
(2) We define real polynomials  \(R_j(X,Y)\)  and    \(I_j(X,Y)\) by 
\[w_j(X,Y) = Z^{n+1-i}\overline{Z}^{j} = R_j(X,Y) + iI_j(X,Y),  \quad R_j(X,Y),  I_j(X,Y)  \in {\mathbb R}[X, Y].\]
(a)  If  \(n\) is even, then 
\[R_j = R_{n+1-j}, \quad  I_j = -I_{n+1-j}, \quad j = 0, \dots , n/2.\]
 If  \(n\) is odd, then 
\[R_j = R_{n+1-j}, \quad  I_j = -I_{n+1-j}, \quad j = 0, \dots , (n-1)/2, \]
\[ I_{(n+1)/2} = 0.\]
(b)  Set
%%%%%%%%%%%%%%%%%%%%%%%%%%%%%%%equation
\begin{equation*}
\begin{split}
{\mathcal S}_2 : = \left \{
   \begin{array}{lll}
   \{R_0, I_0, \dots , R_{n/2},  I_{n/2} \} \quad \mbox{if n is even}, \\
\\
  \{R_0, I_0, \dots , R_{(n-1)/2},  I_{(n-1)/2}, R_{(n+1)/2}\}\quad \mbox{if n is odd}.
   \end{array}
   \right.
\end{split}
\end{equation*}
Then the  family \({\mathcal S}_2\) is linearly independent over \( {\mathbb C}\).\\
\end{lemma}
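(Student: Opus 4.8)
The plan is to reduce all three parts to two facts: the algebraic independence of $Z$ and $\overline{Z}$, and the single conjugation symmetry $\overline{w_j} = w_{n+1-j}$ together with a dimension count.

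For part (1), I would first record that the linear change of variables $X = \tfrac12(Z+\overline{Z})$, $Y = \tfrac{1}{2i}(Z-\overline{Z})$ is invertible over $\mathbb{C}$, so that $\mathbb{C}[X,Y] = \mathbb{C}[Z,\overline{Z}]$ with $Z$ and $\overline{Z}$ serving as two algebraically independent indeterminates. Under this identification each $w_j = Z^{n+1-j}\overline{Z}^{j}$ is a monomial of bidegree $(n+1-j,\,j)$, and these bidegrees are pairwise distinct as $j$ ranges over $0,\dots,n+1$. Distinct monomials are linearly independent, which gives (1); in particular $\mathrm{span}_{\mathbb{C}}\,\mathcal{S}_1$ has dimension exactly $n+2$.

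For part (2)(a), the key observation is that conjugating the coefficients of the polynomial $w_j$ interchanges $Z$ and $\overline{Z}$, so $\overline{w_j} = \overline{Z}^{\,n+1-j} Z^{j} = w_{n+1-j}$. Writing $w_j = R_j + iI_j$ with $R_j,I_j$ having real coefficients, the conjugate equals $R_j - iI_j$; comparing with $w_{n+1-j} = R_{n+1-j} + iI_{n+1-j}$ and invoking uniqueness of the real/imaginary decomposition yields $R_{n+1-j} = R_j$ and $I_{n+1-j} = -I_j$. When $n$ is odd the index $j=(n+1)/2$ is fixed by $j \mapsto n+1-j$, so this relation forces $I_{(n+1)/2} = -I_{(n+1)/2} = 0$, consistent with $w_{(n+1)/2} = (X^2+Y^2)^{(n+1)/2}$ being a real polynomial. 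This is precisely the content of (a).

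For part (2)(b), I would exploit that the passage between $\{w_j,w_{n+1-j}\}$ and $\{R_j,I_j\}$ is invertible: $R_j = \tfrac12(w_j + w_{n+1-j})$ and $I_j = \tfrac{1}{2i}(w_j - w_{n+1-j})$ whenever $j \ne n+1-j$, while $R_{(n+1)/2} = w_{(n+1)/2}$ in the fixed-point case. The involution $j \mapsto n+1-j$ partitions $\{0,\dots,n+1\}$ into $(n+2)/2$ two-element orbits when $n$ is even, and into $(n+1)/2$ two-element orbits plus the single fixed point $(n+1)/2$ when $n$ is odd. Pairing off orbit by orbit shows $\mathrm{span}_{\mathbb{C}}\,\mathcal{S}_2 = \mathrm{span}_{\mathbb{C}}\,\mathcal{S}_1$; since $\mathcal{S}_2$ has exactly $n+2$ elements (in both parities) and spans the $(n+2)$-dimensional space of part (1), it must be a basis, hence linearly independent. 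The computations are entirely routine, and the only place demanding care is the index bookkeeping in (b) --- keeping the even case (no fixed point) and the odd case (one fixed point contributing the lone extra $R_{(n+1)/2}$) straight so that $\lvert \mathcal{S}_2\rvert$ matches the dimension $n+2$. Conceptually, $\mathcal{S}_2$ is simply obtained from the basis $\mathcal{S}_1$ by an invertible change of coordinates adapted to complex conjugation.
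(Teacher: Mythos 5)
Your proof is correct, and for part (1) it takes a genuinely different route from the paper. The paper establishes linear independence of $\mathcal{S}_1$ analytically: it introduces the Hermitian product $\langle w_j, w_k\rangle = \tfrac{n+2}{\pi}\int_{X^2+Y^2\le 1} w_j\overline{w}_k\,dX\,dY$ and observes that the monomials $Z^{n+1-j}\overline{Z}^{j}$ are pairwise orthogonal on the unit disk, hence independent. You instead note that $(X,Y)\mapsto(Z,\overline{Z})$ is an invertible linear change of coordinates over $\mathbb{C}$, so the $w_j$ are distinct monomials in $\mathbb{C}[Z,\overline{Z}]=\mathbb{C}[X,Y]$ and are independent for purely algebraic reasons. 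Your argument is more elementary (no integration) and makes the dimension count $\dim\mathrm{span}\,\mathcal{S}_1=n+2$ explicit, which you then reuse in (b); the paper's orthogonality argument is equally valid and fits the authors' broader context of orthogonal polynomial families, but buys nothing extra for this lemma. For (2)(a) your conjugation argument $\overline{w_j}=w_{n+1-j}$ is exactly what the paper dismisses as trivial, and for (2)(b) you use the same inversion formulas $R_j=\tfrac12(w_j+w_{n+1-j})$, $I_j=\tfrac1{2i}(w_j-w_{n+1-j})$ as the paper, supplemented by a careful orbit/dimension count that the paper leaves implicit.
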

%%%%%%%%%%%%%%%%%%%%% proof 
\begin{proof}\quad
(1)  \quad We define a Hermitian product on \({\mathcal S}_1\) by
\[<w_j, w_k> = \frac {n+2}{\pi} \int\limits_{|X^2+Y^2| \le 1} w_j\overline{w}_kdXdY.\]
Then elements \(w_j(X,Y)\)  are orthogonal on the unit disk.  Hence the  family \({\mathcal S}_1\) is linearly independent over \( {\mathbb C}\).\\
(2)  The assertion (a) is trivial.\\
%%case 1
(b)\quad  Case 1 :  \quad \(n\) is even.  Note that 
\[R_j = \frac 12(w_j + w_{n+1-j}), \quad I_j = \frac 1{2i}(w_j - w_{n+1-j}), \quad j = 0, \dots , n/2. \]
Since the family \(\{w_j\}\) is  linearly independent, the  family \({\mathcal S}_2\) is linearly independent.\\
%%case 2
 Case 2 :  \quad 
\(n\) is odd.  By the similar method we can prove the assertion (b).
\end{proof}

We recall \( f_{n}(T, U)\)  and the definition of the map \(\Phi_n\).
\begin{equation*} 
 f_{n}(T, U) = \zeta_0T^{n+1} +  (-1)\zeta_1T^nU +\dots + (-1)^n\zeta_nTU^{n} + (-1)^{n+1}\zeta_{n+1}U^{n+1}.
\end{equation*}
Substituting
\[T = i(X+iY) \quad \mbox{and} \quad U = i(X-iY).\]
in  \(f_n(T, U)\) we obtain a real polynomial 
\begin{equation*} 
 g_{n}(X, Y) = a_0X^{n+1} + a_1X^{n}Y +  \cdot \cdot \cdot + a_nXY^{n} + a_{n+1}Y^{n+1}.
\end{equation*}
Let
\[\zeta_j = \xi_j + i\eta_j,\enskip \xi_i, \eta_j \in {\mathbb R}, \quad j = 0, 1, \dots [(n+1)/2].\]
Note that if \(n\) is odd, then  \(\eta_{(n+1)/2} = 0.\)

We denote by \(v_1\) the transpose of the row vector  \((a_0, a_1,  \dots , a_{n+1}).\)   We denote by \(v_2\) the transpose of the row vector  \((\xi_0, \eta_0, \xi_1, \eta_1, \dots , \xi_{n/2}, \eta_{n/2})\)  if \(n\) is even, or the row vector 
 \((\xi_0, \eta_0, , \dots , \xi_{(n-1)/2}, \eta_{(n-1)/2}, \xi_{(n+1)/2})\)  if \(n\) is odd.

%%%%%%%%%%%%%%lemma 2.5
\begin{lemma} \label{lemma:2.5}
There exists an invertible matrix \(M\) satisfying
\[v_1 = Mv_2.\]
\end{lemma}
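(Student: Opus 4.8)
The plan is to factor the linear assignment $v_2 \mapsto v_1$ through the coordinates of $g_n$ relative to the family $\mathcal{S}_2$, and to show that each factor is invertible. Both $v_1$ and $v_2$ are vectors in $\mathbb{R}^{n+2}$: the number of coefficients $a_0,\dots,a_{n+1}$ equals $n+2$, and a short count shows that the list of real parameters making up $v_2$ also has length $n+2$ in both the even and odd cases. Hence $M$ is square, and it suffices to prove that $v_2 \mapsto v_1$ is a bijection.

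First I would use Lemma 2.3 to write $g_n = f_n(Z)$ as a real part and then substitute $\zeta_k = \xi_k + i\eta_k$ and $w_k = R_k + iI_k$. Taking $2\mathrm{Re}$ and collecting terms, for $n$ even one obtains
\[ g_n = \sum_{k=0}^{n/2} 2(-1)^{n/2+k+1}\bigl(\xi_k I_k + \eta_k R_k\bigr), \]
and an analogous expression (with an extra middle contribution $\xi_{(n+1)/2}R_{(n+1)/2}$) for $n$ odd. The key point is that each coordinate of $g_n$ in $\mathcal{S}_2$ is a nonzero constant times exactly one entry of $v_2$: here $\eta_k$ appears only in the coefficient of $R_k$ and $\xi_k$ only in that of $I_k$. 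Thus the assignment sending $v_2$ to the coordinate vector of $g_n$ with respect to $\mathcal{S}_2$ is represented, after a permutation of rows pairing $\xi_k$ with $I_k$ and $\eta_k$ with $R_k$, by a diagonal matrix with nonzero diagonal entries, and is therefore invertible.

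Next, by Lemma 2.4(b) the family $\mathcal{S}_2$ is linearly independent over $\mathbb{C}$, hence over $\mathbb{R}$. Since it consists of $n+2$ real forms lying in the $(n+2)$-dimensional real vector space of real binary forms of degree $n+1$ spanned by the monomials $X^{n+1-j}Y^j$, it is a basis of that space. The change-of-basis matrix from $\mathcal{S}_2$ to the monomial basis is then invertible and carries the coordinate vector of $g_n$ with respect to $\mathcal{S}_2$ to $v_1$. Composing the two invertible factors produces the desired invertible matrix $M$ with $v_1 = Mv_2$. (Abstractly, invertibility of $M$ also follows at once from Proposition 2.1, since $M$ is the matrix of the isomorphism $\Phi_n$ read off in the coordinate systems $v_2$ on $\mathcal{A}_n$ and $v_1$ on $\mathcal{F}_n$.)

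The hard part will be the bookkeeping in the first step: one must check that after taking the real part the contributions of $\xi_k$ and $\eta_k$ separate, so that each $\mathcal{S}_2$-coordinate of $g_n$ depends on only one of them rather than on a mixture. This clean separation, which is what makes the first factor diagonal up to a permutation instead of merely triangular, relies on the symmetry $\zeta_k = \bar\zeta_{n+1-k}$ together with the relations $R_j = R_{n+1-j}$ and $I_j = -I_{n+1-j}$ recorded in Lemma 2.4(2)(a).
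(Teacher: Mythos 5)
Your proposal is correct and follows essentially the same route as the paper: expand $g_n$ via Lemma 2.3 into the $\xi_k I_k + \eta_k R_k$ form and invoke the linear independence of $\mathcal{S}_2$ from Lemma 2.4(b) to conclude invertibility. You merely make explicit the factorization of $M$ into a (permuted) diagonal matrix and a change-of-basis matrix, which the paper leaves implicit.
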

%%%%%%%%%%%%%%%%%%%%% proof 
\begin{proof}%%case 1
\quad  Case 1 :  \quad \(n\) is even.  From Lemma 2.3, we know that 
\begin{equation*}
\begin{split}
f_n(i(X+iY), i(X-iY)) &
 = 2(-1)^{n/2+1}\{(\xi_0I_0 + \eta_0R_0) + \dots \\
&+  (-1)^{n/2}(\xi_{n/2}I_{n/2} + \eta_{n/2}R_{n/2})\}.\\
                         & = a_0X^{n+1} +  a_1X^{n}Y + \dots +  a_{n+1}Y^{n+1} .
\end{split}
\end{equation*}
We denote by \(V\) the vector space  over \( {\mathbb C}\) generated by homogeneous polynomials \(X^{n+1} , X^{n}Y, \dots .\) and 
\(Y^{n+1} \).  We see from Lemma 2.4 2(b) that  \(\{R_0, I_0, \dots , R_{n/2},  I_{n/2} \}\)  is another base of \(V\).  Then there exists an invertible matrix \(M\) satisfying 
\[v_1 = Mv_2.\]
%%case 2
 Case 2 :  \quad 
\(n\) is odd.  The proof is similar.
\end{proof}
%%%%%%%%%%%%%%%%%%%%% proof of theorem 2.1
\begin{proof}[Proof of Proposition 2.1.]\quad
From Lemma 2.5, we conclude that the map \(\Phi_n\) is bijective.  
\end{proof}

Next we consider the relation between the roots of  \(f_n(T, U)\) and those of the corresponding form  \(g_n(X, Y)\) under \(\Phi_n\).  Let
%%%%equation 2.8
\begin{equation} 
\begin{split}
 f_n(T, U) = \mathop \Pi_{j=1}^{n+1}(u_jT - t_jU),\\
 g_n(X, Y) = \mathop \Pi_{j=1}^{n+1}(y_jX - x_jY).
\end{split}
\end{equation}
We recall (2.5) :
%%%%equation 
\begin{equation*} 
X = -\frac i2(T+U) \quad \mbox{and} \quad Y = -\frac 12(T - U).
\end{equation*}
We define a map \(\varphi\) from \( {\mathbb P}^1({\mathbb C})\)  to \( {\mathbb P}^1({\mathbb C})\)  by 
\[ \varphi(T : U) : = (X : Y)  = (-\frac i2(T+U) : -\frac 12(T - U)).\]
Then with a suitable numbering
\[\varphi(t_j : u_j) = (x_j : y_j) = (i(t_j + u_j) : t_j - u_j).\]
The map \(\varphi\) is a  M\(\ddot{o}\)bius transformation on  \( {\mathbb P}^1\).\\
Clearly \enskip  \(y_j = 0\)\enskip if and only if \enskip \(t_j = u_j\).\\
If \enskip  \(y_j \ne 0\),\enskip we may set  \enskip \(y_j = u_j = 1\).  Then if \enskip  \(t_j \ne 1,\)\\
%%%%equation 2.9
\begin{equation}
 x_j = \sqrt{-1}\frac{t_j+1}{t_j-1}.
\end{equation}
If  \enskip  \(y_j = 0\),\enskip then  \enskip \(\varphi^{-1}(1 : 0) = (1 : 1)\).\\
The point (1 : 0) is the point at infinity and the point  (1 : 1) lies on the unit circle \(C\).
The real axis of  \( {\mathbb P}^1\) maps into the unit circle under the  M\(\ddot{o}\)bius transformation  \(\varphi^{-1}\).  Then we have the following proposition. 
%%%%proposition 2.6
 \begin{pro} \label{pro:2.6}
We suppose \enskip \(y_j = u_j = 1\) \enskip and  \enskip  \(t_j \ne 1\) .\\
(1) \(x_j \)\enskip is real  if and only if \enskip \(\mid t_j \mid = 1\).\\
(2) \(x_j, x_{j+1} \notin   {\mathbb R}\) \enskip and \enskip  \(\overline{x_j} = x_{j+1}\) \\
  if and only if \enskip \( t_j = re^{i\theta}\) \enskip and \enskip \( t_{j+1} = \frac 1re^{i\theta}\) \enskip with \enskip \(r \ne 1\).
\end{pro}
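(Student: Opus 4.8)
The plan is to exploit the explicit form of the Möbius transformation recorded in equation (2.9), together with the fact (already noted just before the proposition) that $\varphi^{-1}$ carries the real axis onto the unit circle $C$. Conceptually, both statements are instances of the general principle that a Möbius transformation preserves symmetry with respect to the circle or line it moves: a point lies on $\mathbb{R}$ exactly when its image under $\varphi^{-1}$ lies on $C$, and two points are reflections of each other across $\mathbb{R}$ (that is, complex conjugates) exactly when their images are reflections across $C$ (that is, related by $t \mapsto 1/\overline{t}$). Rather than invoke this principle abstractly, I would verify both claims by direct computation from (2.9), which keeps the argument self-contained and matches the computational style of the preceding lemmas.

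For part (1), I would start from $x_j = \sqrt{-1}\,(t_j+1)/(t_j-1)$ and impose the reality condition $x_j = \overline{x_j}$. Writing $\overline{x_j} = -\sqrt{-1}\,(\overline{t_j}+1)/(\overline{t_j}-1)$ and clearing denominators reduces $x_j=\overline{x_j}$ to
\[
(t_j+1)(\overline{t_j}-1) + (\overline{t_j}+1)(t_j-1) = 0.
\]
Expanding the two products makes the linear terms cancel and leaves $2\abs{t_j}^2 - 2 = 0$, so $x_j$ is real if and only if $\abs{t_j}=1$. This is a short algebraic identity and presents no difficulty.

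For part (2), I would first observe that by part (1) the hypotheses $x_j, x_{j+1}\notin\mathbb{R}$ are equivalent to $\abs{t_j}\ne 1$ and $\abs{t_{j+1}}\ne 1$. It then remains to translate the single relation $\overline{x_j}=x_{j+1}$ into a relation between $t_j$ and $t_{j+1}$. Using (2.9) for both indices and equating $\overline{x_j}$ with $x_{j+1}$ yields
\[
-\frac{\overline{t_j}+1}{\overline{t_j}-1} = \frac{t_{j+1}+1}{t_{j+1}-1}.
\]
Cross-multiplying collapses both the numerator products and the constants, leaving $2 = 2\,\overline{t_j}\,t_{j+1}$, hence $t_{j+1} = 1/\overline{t_j}$. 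Writing $t_j = re^{i\theta}$ then gives $t_{j+1} = (1/r)e^{i\theta}$ directly, and the condition $r\ne 1$ corresponds to $x_j\notin\mathbb{R}$ via part (1). The converse direction is immediate by substituting these polar expressions back into (2.9).

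The only step requiring any care is the algebraic simplification in part (2): one must apply the conjugation and the Möbius relation in the right order so that the cross-multiplication genuinely telescopes to $\overline{t_j}\,t_{j+1}=1$, since a sign slip there would spuriously produce $t_{j+1}=\overline{t_j}$ instead. Beyond watching that sign, the whole proposition reduces to a pair of one-line computations, and I anticipate no genuine obstacle.
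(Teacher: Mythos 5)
Your computations are correct: both verifications check out, including the sign-sensitive step in part (2) where cross-multiplication telescopes to $2\overline{t_j}t_{j+1}=2$, and the reduction of part (1) to $2\lvert t_j\rvert^2-2=0$. The paper, by contrast, gives no explicit proof of Proposition 2.6 at all: it derives the statement directly from the remark immediately preceding it, namely that $\varphi$ is a M\"obius transformation and that $\varphi^{-1}$ carries the real axis of ${\mathbb P}^1$ into the unit circle $C$, so that the symmetry principle for M\"obius maps (reflection in ${\mathbb R}$ corresponds to inversion $t\mapsto 1/\overline{t}$ in $C$) yields both parts at once. You explicitly name this principle in your opening paragraph and then opt to verify it by hand from (2.9). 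The two routes prove the same thing; the paper's is shorter and conceptually cleaner but leaves the reader to supply the symmetry argument, while yours is self-contained, matches the computational style of the surrounding lemmas, and has the practical advantage of exhibiting exactly where the relation $t_{j+1}=1/\overline{t_j}$ (rather than $t_{j+1}=\overline{t_j}$) comes from. One small point worth making explicit in your write-up of part (2): the converse direction also needs the observation that $t_{j+1}=\frac1r e^{i\theta}$ with $r\ne 1$ forces $\lvert t_{j+1}\rvert\ne 1$, so that $x_{j+1}\notin{\mathbb R}$ as well; this is immediate from part (1) but should be stated, since the proposition asserts both non-reality conditions.
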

 The pair \((t_j : 1)\)  and  \((t_{j+1} : 1)\)  is said to be \textit{symmetric in the unit circle C} if it satisfies that \enskip \( t_j = re^{i\theta}\) \enskip and \enskip \( t_{j+1} = \frac 1re^{i\theta}\) \enskip with \enskip \(r \ne 1\) . 
%%%%%%%%%%%%%%%%%%%%%%%%%%%%%%%%%%%%%%%%%%%%%%%%%%%%%%%%%%%%%%%%%%%%%%%%  2.2
\subsection{The spaces  \({\mathcal B}_n \)  and  \({\mathcal F}_n \)}\hspace{0cm}

In this section we  will give an isomorphism \(\Psi_n\) from  the space \({\mathcal B}_n \)  onto  \({\mathcal F}_n \) .  We divide   \({\mathcal B}_n \)   into two spaces   \({\mathcal B}(e, 1) \)  and  \({\mathcal B}(o, -1) \).  Let   \({\mathcal B}(e, 1) \)  denote the space of the self-reciprocal forms :
 %%%%equation 2.10
\begin{equation} 
p_{n}(T, U) = \zeta_0T^{n+1} + \zeta_1T^{n}U + \zeta_2T^{n-1}U^2 + \cdot \cdot \cdot + \zeta_nTU^{n} + \zeta_{n+1}U^{n+1},
\end{equation}
where \(n\) is even and 
%%%%equation 
\begin{equation*} 
\zeta_k \in  {\mathbb C}, \quad \zeta_k = {\bar \zeta}_{n+1-k}, \quad k = 0,  \cdot \cdot \cdot , n+1.
\end{equation*} 
Let   \({\mathcal B}(o, -1) \)  denote the space of the self-inversive forms :
 %%%%equation 
\begin{equation*} 
p_{n}(T, U) = \zeta_0T^{n+1} + \zeta_1T^{n}U + \zeta_2T^{n-1}U^2 + \cdot \cdot \cdot + \zeta_nTU^{n} + \zeta_{n+1}U^{n+1},
\end{equation*}
where \(n\) is odd and 
%%%%equation 
\begin{equation*} 
\zeta_k \in  {\mathbb C}, \quad \zeta_k = -{\bar \zeta}_{n+1-k}, \quad k = 0,  \cdot \cdot \cdot , n+1.
\end{equation*} 
Since  \(\zeta_{(n+1)/2} = -{\bar \zeta}_{(n+1)/2}, \)  \enskip we may set 
 \[\zeta_{(n+1)/2} = i{ \tau}_{(n+1)/2}, \enskip   \tau_{(n+1)/2} \in  {\mathbb R}.\]

In the first place we study the space  \({\mathcal B}(e, 1) \) .  Clearly
\[p_n(T, \overline{T}) = \overline{p_n(T, \overline{T})}.\]
Then we set 
%%%%equation 2.11
\begin{equation} 
T = X + iY \enskip \mbox{ and } \enskip U = X -iY.
\end{equation}
We denote the real polynomial  \(p_n( X + iY,   X -iY)\)  by
%%%%equation 
\begin{equation*} 
\begin{split}
 q_{n}(X, Y) = b_0X^{n+1} + b_1X^{n}Y +  \cdot \cdot \cdot + b_nXY^{n} + b_{n+1}Y^{n+1}, \\
\quad \mbox{with} \quad b_0, b_1,  \cdot \cdot \cdot , b_{n+1} \in {\mathbb R}.
\end{split}
\end{equation*}
We define the map  \(\Psi_n\) by  \(\Psi_n(p_n) = q_n.\)  Set  \(Z = X + iY.\)   Then as in Lemma 2.3 we have
%%%%equation 
\begin{equation*} 
 p_{n}(Z) = 2Re\enskip (\zeta_0Z^{n+1} + \zeta_1Z^{n}\overline{Z} + \dots +  \zeta_{n/2}Z^{(n+1)/2}\overline{Z}^{n/2}  ),
\end{equation*}

Next we consider  the space  \({\mathcal B}(o, -1) \) .    Set    \(\epsilon = e^{\pi i/(2(n+1))}.\)
Clearly
\[p_n(\epsilon Z,  \epsilon\overline{Z}) = \overline{p_n(\epsilon T, \epsilon\overline{Z})}.\]
Then we set 
%%%%equation 2.12
\begin{equation} 
T = \epsilon(X + iY) \enskip \mbox{ and } \enskip U = \epsilon(X -iY).
\end{equation}
We  denote the real polynomial  \(p_n( \epsilon (X + iY),   \epsilon (X -iY))\)  by
%%%%equation 
\begin{equation*} 
\begin{split}
 q_{n}(X, Y) = b_0X^{n+1} + b_1X^{n}Y +  \cdot \cdot \cdot + b_nXY^{n} + b_{n+1}Y^{n+1}, \\
\quad \mbox{with} \quad b_0, b_1,  \cdot \cdot \cdot , b_{n+1} \in {\mathbb R}.
\end{split}
\end{equation*}
We define the map  \(\Psi_n\) by  \(\Psi_n(p_n) = q_n.\)  Set  \(Z = X + iY.\)   Then
%%%%equation 
\begin{equation*} 
\begin{split}
 p_{n}(Z) = -2Im\enskip (\zeta_0Z^{n+1} + \zeta_1Z^{n}\overline{Z} + \dots \\
+  \zeta_{(n-1)/2}Z^{(n+3)/2}\overline{Z}^{(n-1)/2}  ) - \tau_{(n+1)/2}(Z\overline{Z})^{(n+1)/2}.
\end{split}
\end{equation*}

Hence in both cases,  by the same proof as the alternative proof  of Proposition 2.1,  we can show the following proposition.
%%%%%%%%%%%%%%%%%%%%%%%%%%%%%%%proposition 2.7
\begin{pro} \label{pro:2.7}
There exists an isomorphism  \( \Psi_n\) from  \({\mathcal B}_n \) onto  \({\mathcal F}_n \).
\end{pro}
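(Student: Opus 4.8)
The plan is to follow, case by case, the alternative proof of Proposition 2.1, since $\Psi_n$ is again a linear map and the only thing at stake is its invertibility. I would split $\mathcal{B}_n$ into $\mathcal{B}(e,1)$ (for $n$ even) and $\mathcal{B}(o,-1)$ (for $n$ odd) and treat each with the substitution already fixed above, namely $T = X+iY$, $U = X-iY$ in the first case and $T = \epsilon(X+iY)$, $U = \epsilon(X-iY)$ with $\epsilon = e^{\pi i/(2(n+1))}$ in the second.

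First I would record the expressions for $p_n(Z)$ displayed just before the statement: for $n$ even, $q_n = 2Re\bigl(\sum_{j=0}^{n/2}\zeta_j w_j\bigr)$, and for $n$ odd, $q_n = -2Im\bigl(\sum_{j=0}^{(n-1)/2}\zeta_j w_j\bigr) - \tau_{(n+1)/2}\,w_{(n+1)/2}$, where $w_j = Z^{n+1-j}\overline{Z}^{\,j}$. Writing $\zeta_j = \xi_j + i\eta_j$ and using $w_j = R_j + iI_j$, I would expand these. In the even case $Re(\zeta_j w_j) = \xi_j R_j - \eta_j I_j$, so $q_n = 2\sum_j(\xi_j R_j - \eta_j I_j)$; in the odd case $Im(\zeta_j w_j) = \xi_j I_j + \eta_j R_j$ and $w_{(n+1)/2} = (Z\overline{Z})^{(n+1)/2} = R_{(n+1)/2}$ is real, so $q_n = -2\sum_{j=0}^{(n-1)/2}(\xi_j I_j + \eta_j R_j) - \tau_{(n+1)/2} R_{(n+1)/2}$. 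In both cases $q_n$ lies in the span of the family $\mathcal{S}_2$ of Lemma 2.4.

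Next I would set up the analog of Lemma 2.5. Let $V$ be the space of real homogeneous forms of degree $n+1$ in $X, Y$; the monomial basis gives the coefficient vector $v_1 = {}^t(b_0, \dots, b_{n+1})$. By Lemma 2.4 (2)(b) the family $\mathcal{S}_2$ is also a basis of $V$. The expansions above show that the $\mathcal{S}_2$-coordinates of $q_n$ are obtained from the parameter vector $v_2$ (the reals $\xi_j, \eta_j$, together with $\tau_{(n+1)/2}$ in the odd case) by a diagonal, hence invertible, linear map: the coordinate on each $R_j$ or $I_j$ is just $\pm 2$ times one $\xi_j$ or $\eta_j$, and on the middle $R_{(n+1)/2}$ it is $-\tau_{(n+1)/2}$. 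Composing with the invertible change of basis from $\mathcal{S}_2$ to the monomial basis yields an invertible matrix $M$ with $v_1 = M v_2$. Since $\Psi_n$ is linear and $M$ is invertible, $\Psi_n$ is an isomorphism onto $\mathcal{F}_n$.

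The only genuine care is in the $\mathcal{B}(o,-1)$ case, which I expect to be the main obstacle: one must verify that the rotation by $\epsilon$ is exactly what makes $p_n(\epsilon Z, \epsilon\overline{Z})$ real, so that $q_n$ has real coefficients, and that the middle coefficient $\zeta_{(n+1)/2} = i\tau_{(n+1)/2}$ contributes the single basis vector $R_{(n+1)/2}$ with no $I_{(n+1)/2}$ term, which holds because $I_{(n+1)/2} = 0$ by Lemma 2.4 (2)(a). Granting these points, the dimension count matches, since $\mathcal{F}_n$, $\mathcal{B}(e,1)$ and $\mathcal{B}(o,-1)$ all have real dimension $n+2$, and the argument closes exactly as for $\Phi_n$.
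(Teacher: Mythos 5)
Your proposal is correct and follows exactly the route the paper takes: the paper's proof of Proposition 2.7 consists precisely of the displayed real/imaginary-part expressions for $p_n(Z)$ in the two cases $\mathcal{B}(e,1)$ and $\mathcal{B}(o,-1)$, followed by the remark that the argument of the alternative proof of Proposition 2.1 (Lemmas 2.4 and 2.5) applies verbatim. You have simply filled in the details of that reduction, including the correct verification that the coefficient map is diagonal in the $\mathcal{S}_2$-basis and that $I_{(n+1)/2}=0$ handles the middle term in the odd case.
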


Next we consider the relation between the roots of  \(p_n(T, U)\) and those of the corresponding form  \(q_n(X, Y)\) under \(\Psi_n\).  Let
%%%%equation 2.13
\begin{equation} 
\begin{split}
 p_n(T, U) = \mathop \Pi_{j=1}^{n+1}(u'_jT - t'_jU),\\
 q_n(X, Y) = \mathop \Pi_{j=1}^{n+1}(y'_jX - x'_jY).
\end{split}
\end{equation}
By (2.11) and (2.12), in both cases we may define 
 a map \(\psi\) from \( {\mathbb P}^1({\mathbb C})\)  to \( {\mathbb P}^1({\mathbb C})\)  by 
\[\psi(t'_j : u'_j) = (i(t'_j + u'_j) : t'_j - u'_j).\]
We assume that   \enskip \(y'_j = u'_j = 1\) \enskip and \enskip \(t'_j \ne 1\).  Then 
%%%%equation2.14 
\begin{equation}
 x'_j = \sqrt{-1}\frac{t'_j+1}{t'_j-1}.
\end{equation}
This is the same as that in (2.9).

%%%%%%%%%%%%%%%%%%%%%%%%%%%%%%%%%%%%%%%%%%%%%%%%%%%%%%%% 2.3
\subsection{Discriminants of monic self-inversive polynomials in \({\mathcal A}_n \)}\hspace{0cm}

We consider  monic self-inversive polynomials in \({\mathcal A}_n \) and their discriminant.  Let  \({\mathcal {MA}}_n \)  denote  self-inversive forms that are written in (2.1) satisfying  \(\zeta_0 = \zeta_{n+1} = 1.\)    The image of \({\mathcal {MA}}_n \) under 
 \(\Phi_{n}\) is denoted by \(\mathcal {MF}_n \).  Any element of \({\mathcal {MF}}_n \)  is written as
%%%%equation 2.15
\begin{equation} 
\begin{split}
 \tilde{g}_{n}(X, Y) = c_0X^{n+1} + c_1X^{n}Y +  \cdot \cdot \cdot + c_nXY^{n} + c_{n+1}Y^{n+1}, \\
c_j \in {\mathbb R}, \quad j = 0, 1, \dots, n+1.
\end{split}
\end{equation}

In general, the general linear group \(GL(2,  {\mathbb R})\) acts on the space of binary forms \({\mathcal F}_n \)  via linear substitution :
\[(X, Y) \to (X, Y)M =  (\alpha X + \gamma Y, \beta X + \delta Y),\]
\[\mbox{where} \quad M = \left( 
\begin{array}{cc}
\alpha &  \beta \\
\gamma & \delta
\end{array} \right); \quad \mbox{that is },  \quad {M\enskip maps}
\]
\[g(X, Y) \to g^M(X, Y) =  g(\alpha X + \gamma Y,  \beta X + \delta Y).\]

%%%%%%%%%%%%%%lemma 2.8
\begin{lemma} \label{lemma:2.8}
Suppose an element \(g_n\) in  \({\mathcal F}_n \)
 satisfies that   \(\Phi_n^{-1}(g_n)\) is written as
\begin{equation*} 
 \zeta_0T^{n+1} - \zeta_1T^{n}U + \zeta_2T^{n-1}U^2 + \cdot \cdot \cdot + (-1)^n\zeta_nTU^{n} + (-1)^{n+1}\zeta_{n+1}U^{n+1},
\end{equation*}
with \quad  \(\zeta_0 \ne 0.\)\\
Then there exists a matrix \(M\) in \(GL(2,  {\mathbb R})\)  satisfying 
\[g_n^M \in {\mathcal {MF}}_n.\]
\end{lemma}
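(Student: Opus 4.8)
The plan is to transport the $GL(2,\mathbb{R})$-action on $\mathcal{F}_n$ through the isomorphism $\Phi_n$ to an action on the self-inversive side $\mathcal{A}_n$, and then to normalize the leading coefficient by a single well-chosen substitution. Throughout I write $g_n=\Phi_n(f_n)$ with $f_n=\Phi_n^{-1}(g_n)$ as in the hypothesis, so $\zeta_0\neq 0$.

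First I would record how a real substitution on $(X,Y)$ looks on the variables $(T,U)$. Writing $Z=X+iY$, the map $\Phi_n$ is the substitution $T=iZ$, $U=i\bar Z$. If $M=\left(\begin{smallmatrix}\alpha&\beta\\\gamma&\delta\end{smallmatrix}\right)\in GL(2,\mathbb{R})$ sends $(X,Y)$ to $(\alpha X+\gamma Y,\beta X+\delta Y)$, then substituting $X=-\tfrac i2(T+U)$, $Y=-\tfrac12(T-U)$ from (2.5) shows that the induced substitution on $(T,U)$ has the conjugate-symmetric shape $T\mapsto pT+qU$, $U\mapsto \bar qT+\bar pU$, where $p=\tfrac12[(\alpha+\delta)+i(\beta-\gamma)]$ and $q=\tfrac12[(\alpha-\delta)+i(\beta+\gamma)]$, and where $|p|^2-|q|^2=\det M$. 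I would then verify that any substitution of this special shape carries $\mathcal{A}_n$ into itself: setting $U=-\bar T$ (the relation $U=-\bar T$ underlying $\Phi_n$) turns $pT+qU$ into $W:=pT-q\bar T$ and $\bar qT+\bar pU$ into $-\bar W$, so the transformed form evaluated at $(T,-\bar T)$ equals $f_n(W,-\bar W)$, which is real; by the criterion established before (2.4) this is exactly self-inversiveness. Hence for every $M\in GL(2,\mathbb{R})$ the form $\tilde f:=\Phi_n^{-1}(g_n^M)$ again lies in $\mathcal{A}_n$, and it is obtained from $f_n$ by the above $(p,q)$-substitution, i.e. $\tilde f(T,U)=f_n(pT+qU,\bar qT+\bar pU)$.

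With this dictionary in hand it suffices to exhibit one admissible $M$, and I would take the simplest one: $q=0$, i.e. the scaled rotation $M_p=\left(\begin{smallmatrix}\mathrm{Re}\,p&\mathrm{Im}\,p\\-\mathrm{Im}\,p&\mathrm{Re}\,p\end{smallmatrix}\right)$ corresponding to $Z\mapsto pZ$, equivalently $T\mapsto pT$, $U\mapsto\bar pU$. Then $\tilde f(T,U)=f_n(pT,\bar pU)$ has coefficients $\tilde\zeta_j=\zeta_j\,p^{\,n+1-j}\bar p^{\,j}$, and the self-inversive relation $\bar\zeta_{n+1-j}=\zeta_j$ immediately gives $\tilde\zeta_j=\bar{\tilde\zeta}_{n+1-j}$, reconfirming $\tilde f\in\mathcal{A}_n$. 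Because $\zeta_0\neq0$ I may choose $p\in\mathbb{C}$ with $p^{\,n+1}=1/\zeta_0$; then $\tilde\zeta_0=\zeta_0\,p^{\,n+1}=1$, and by self-inversiveness $\tilde\zeta_{n+1}=\bar{\tilde\zeta}_0=1$. Thus $\tilde f\in\mathcal{MA}_n$ and $g_n^{M_p}=\Phi_n(\tilde f)\in\mathcal{MF}_n$. Finally $\det M_p=|p|^2>0$, so $M_p\in GL(2,\mathbb{R})$, which completes the argument.

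I expect the only delicate point to be the bookkeeping in the second paragraph: pinning down the exact $(p,q)$-formulas and, above all, checking that the conjugate-symmetric substitutions preserve self-inversiveness. Everything after that is a one-line normalization, and the hypothesis $\zeta_0\neq0$ is used precisely to solve $p^{\,n+1}=1/\zeta_0$. One could even bypass the general dictionary and argue directly with the scaled rotation $Z\mapsto pZ$, since only that one-parameter subgroup of $GL(2,\mathbb{R})$ is actually needed; I would nevertheless include the general correspondence because it clarifies why passing between the two $GL(2,\mathbb{R})$-actions loses no information and will be convenient in the subsequent discriminant computations.
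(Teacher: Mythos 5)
Your proof is correct and takes essentially the same route as the paper: both normalize the leading coefficient by a scaled rotation, $Z\mapsto pZ$ (equivalently $T\mapsto pT$, $U\mapsto\bar p\,U$) with $p^{n+1}=1/\zeta_0$, which the paper phrases as writing $\zeta_0=(re^{i\theta})^{n+1}$ and replacing $\zeta_0Z^{n+1}$ by $(Z')^{n+1}$ via $Z'=re^{i\theta}Z$. Your additional paragraph setting up the general $(p,q)$-dictionary for the full $GL(2,\mathbb{R})$-action is not in the paper's proof, but it is a correct and harmless elaboration of the same idea.
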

%%%%%%%%%%%%%%%%%%%%% proof 
\begin{proof}\quad
From Lemma 2.3, we know that
%%%%equation 
\begin{equation*} 
 g_{n}(X, Y) = 2Re\enskip i^{n+1}(\zeta_0Z^{n+1} - \zeta_1Z^{n}\overline{Z} + \dots +   (-1)^{n/2}\zeta_{n/2}Z^{(n+1)/2}\overline{Z}^{n/2}  ).
\end{equation*}
Let \[\zeta _0 = (re^{i\theta})^{n+1}, \quad r > 0, \quad 0 \le \theta < 2\pi.\]
To replace \enskip \(\zeta _0 Z^{n+1}\)  by  \((Z')^{n+1}\), we set 
\[\left( 
\begin{array}{c}
X' \\
Y'
\end{array} \right)
= r\left( 
\begin{array}{cc}
\cos\theta &  -\sin\theta \\
\sin\theta & \cos\theta
\end{array} \right)
\left( 
\begin{array}{c}
X \\
Y
\end{array} \right),
\]
where \quad
\(Z = X +iY, \quad Z' = X' + iY'.\)  Then we have new coefficients \enskip \(\zeta'_0 = 1,  \zeta'_1,  \dots ,  \zeta'_{n/2}  \) .
Hence by (2.2) we get new coefficients   \(\zeta'_0,  \zeta'_1, \dots , \zeta'_{n+1}\). 
The same folds when  \(n\)  is odd.
\end{proof}

Note that this transformation is not unique.

Next we define the discriminant of \( \tilde{g}_{n}(X, 1)\) in (2.15).  To define the discriminant of \( \tilde{g}_{n}(X, 1)\) we assume that  \(c_0 \ne 0\).  We define the discriminant by

%%%%equation 2.16 
\begin{equation}
Dis(\tilde{g}_{n}(X, 1)) = c_0^{2n} \mathop{\Pi}  _{1 \le j < k \le n+1}(x_j - x_k)^2,
\end{equation}
where  \(x_1, x_2, \dots , x_{n+1}\)  are roots of \( \tilde{g}_{n}(X, 1)\).   Though  Gelfand,   Kapranov  and  Zelevinsky  \cite{GKZ}  defined  \(Dis(\tilde{g}_{n}(X, 1))\)  to be  \((-1)^{n(n+1)/2}\)  times the right-hand side of (2.16),  we adopt the definition in (2.16).  In our definition,    if all the roots of \enskip
  \(\tilde{g}_{n}(X, 1)\)   are real, then  we have \enskip  \(Dis(\tilde{g}_{n}(X, 1)) \geq 0\) .  This is convenient to Corollary 2.10 below.

In this section, we consider monic self-inversive polynomials in  \({\mathcal A}_n \).  We denote the image of  \( \tilde{g}_{n}(X, Y)\)   in  (2.15) under the map \(\Phi_n^{-1}\) by 
%%%%equation2.17 
\begin{equation} 
 \tilde{f}_{n}(T, U) = T^{n+1} - \zeta_1T^{n}U + \zeta_2T^{n-1}U^2 + \cdot \cdot \cdot + (-1)^n\zeta_nTU^{n} + (-1)^{n+1}U^{n+1}.
\end{equation}
We consider the roots of  \( \tilde{f}_{n}\)  and \( \tilde{g}_{n}\).  We use the notation in (2.8).  In this case we may set   \(u_j = y_j = 1\)  for   \(j =1, \dots , n+1\).     Hence  \(x_1, x_2, \dots , x_{n+1}\)  are roots of \( \tilde{g}_{n}(X, 1)\)  and   \(t_1, t_2, \dots, t_{n+1} \) are roots of   \( \tilde{f}_{n}(T, 1)\).

We will define a \((n+1)\times(n+1)\)  matrix  \(H_n\)  that is a slightly modified version of the matrix introduced by Eier and Lidl \cite{EL}.
Let  \(t_1,  1 \le j \le n+1,\) be elements in \({\mathbb C}\) satisfying \(t_1t_2 \dots t_{n+1} = 1\).  The coefficient   \(\zeta_j\) is the \(j\)-th elementary symmetric function in \(t_1,\dots, t_{n+1}\).  We define the \(m\)-th power \(h_m(m \in {\mathbb Z})\)  by
\[h_m(\zeta_1, \dots , \zeta_n) :  =  \sum_{j=1}^{n+1}t_j^m.\]
Since \enskip \(t_1t_2 \dots t_{n+1} = 1\),  \enskip we may regard \(h_{m}\) as a polynomial in  \enskip \(\zeta_1, \zeta_2, \dots , \zeta_n\)  even when  \enskip \(m < 0\).\\

We define a  \((n+1)\times(n+1)\)  matrix  \(H_n\)  by
\[H_n :  = \left(
\begin{array}{cccc}
h_0 & h_{-1} &\dots & h_{-n}\\
h_1 & h_{0} &\dots & h_{-n+1}\\
\vdots &   &\ddots & \vdots\\
h_n & h_{n-1} &\dots & h_{0}
\end{array}
\right).\]
In our case 
\[\zeta_j = \overline{\zeta}_{n+1-j} , \enskip j = 1, \dots , n.\]
     Eier and Lidl \cite{EL} considered only the case that any \(t_j\)  lies on the unit circle \(C\).

The determinant  \(det \enskip H_n\) is a polynomials in \(\zeta_1, \dots , \zeta_n\).  The relation between the discriminant \(Dis( \tilde{g}_{n}(X, 1))\)  in (2.16) and the  determinant  \(det\enskip H_n\) is shown in the following theorem.
%%%%%%%%%%%%%%%%%%%%%%%%%%%%%%%theorem 2.9
\begin{theorem} \label{theorem:2.9}
Under the above notations we have 
\[Dis( \tilde{g}_{n}(X,1)) = 2^{n(n+1)}det\enskip H_n.\] 
\end{theorem}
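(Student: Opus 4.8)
The plan is to express both sides entirely in terms of the roots $t_1,\dots,t_{n+1}$ of $\tilde f_n(T,1)$ and then match them. The decisive observation is that the Toeplitz matrix $H_n$ factors as a product of two Vandermonde matrices. Writing $V=(t_j^{\,a})_{0\le a\le n,\;1\le j\le n+1}$ and $W=(t_j^{\,-b})_{1\le j\le n+1,\;0\le b\le n}$, one checks at once that
\[
(VW)_{ab}=\sum_{j=1}^{n+1}t_j^{\,a-b}=h_{a-b},
\]
so that $H_n=VW$. Because $t_1\cdots t_{n+1}=1$, each $h_m$ — and hence each entry of $H_n$ — is a genuine polynomial in $\zeta_1,\dots,\zeta_n$, so the appearance of negative powers in $W$ causes no difficulty.

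Since $V$ and $W$ are both square of size $n+1$, multiplicativity of the determinant gives $\det H_n=\det V\cdot\det W$. Here $\det V=\prod_{1\le j<k\le n+1}(t_k-t_j)$ is the standard Vandermonde determinant, while $\det W=\prod_{j<k}(t_k^{-1}-t_j^{-1})$. Using $t_k^{-1}-t_j^{-1}=-(t_k-t_j)/(t_jt_k)$ together with $\prod_{j<k}t_jt_k=(\prod_j t_j)^{\,n}=1$, the second factor collapses to $(-1)^{\binom{n+1}{2}}\prod_{j<k}(t_k-t_j)$, whence
\[
\det H_n=(-1)^{\binom{n+1}{2}}\prod_{1\le j<k\le n+1}(t_k-t_j)^2 .
\]

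For the left-hand side I would use (2.9), $x_j=\sqrt{-1}\,(t_j+1)/(t_j-1)$, to compute the root differences; a short calculation gives $x_j-x_k=2\sqrt{-1}\,(t_k-t_j)/\big((t_j-1)(t_k-1)\big)$, hence $(x_j-x_k)^2=-4(t_k-t_j)^2/\big((t_j-1)^2(t_k-1)^2\big)$. Multiplying over all pairs and using $\prod_{j<k}(t_j-1)^2(t_k-1)^2=\prod_j(t_j-1)^{2n}$ yields
\[
\prod_{j<k}(x_j-x_k)^2=(-4)^{\binom{n+1}{2}}\frac{\prod_{j<k}(t_k-t_j)^2}{\prod_j(t_j-1)^{2n}} .
\]
It then remains to evaluate the leading coefficient $c_0$ of $\tilde g_n(X,1)$. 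Setting $Y=0$ in $\tilde g_n=\tilde f_n\big(i(X+iY),i(X-iY)\big)$ and using $\tilde f_n(T,U)=\prod_j(T-t_jU)$ gives $c_0=i^{n+1}\prod_j(1-t_j)$, so $c_0^{2n}=\prod_j(t_j-1)^{2n}$ (the factors $i^{2n(n+1)}$ and $(-1)^{2n(n+1)}$ are $1$ since $n(n+1)$ is even). Substituting into $\mathrm{Dis}(\tilde g_n(X,1))=c_0^{2n}\prod_{j<k}(x_j-x_k)^2$, the factor $\prod_j(t_j-1)^{2n}$ cancels, and with $(-4)^{\binom{n+1}{2}}=(-1)^{\binom{n+1}{2}}2^{n(n+1)}$ (because $2\binom{n+1}{2}=n(n+1)$) one is left with $(-1)^{\binom{n+1}{2}}2^{n(n+1)}\prod_{j<k}(t_k-t_j)^2$; comparing with the formula for $\det H_n$ above, the two $(-1)^{\binom{n+1}{2}}$ factors cancel and the theorem follows.

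The conceptual heart — and the only non-routine step — is recognizing the Vandermonde factorization $H_n=VW$; everything afterward is elementary symmetric-function bookkeeping. The main technical care is in tracking the powers of $-1$, $\sqrt{-1}$ and $2$ so that the signs on the two sides cancel exactly (in particular verifying $c_0^{2n}=\prod_j(t_j-1)^{2n}$ and that the two copies of $(-1)^{\binom{n+1}{2}}$ match), together with confirming that the normalizations $u_j=y_j=1$ and $t_j\neq 1$ (the latter equivalent to $c_0\neq 0$) are compatible throughout.
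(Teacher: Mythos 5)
Your proposal is correct and follows essentially the same route as the paper's proof: the Vandermonde factorization of $H_n$ giving $\det H_n=(-1)^{n(n+1)/2}\prod_{j<k}(t_j-t_k)^2$, the identity $x_j-x_k=2i(t_k-t_j)/\bigl((t_j-1)(t_k-1)\bigr)$ from (2.9), and the evaluation $c_0=i^{n+1}\prod_j(1-t_j)$ (which the paper phrases as $\tilde f_n(1,1)=c_0(-i)^{n+1}$ via the substitution (2.5)). The only cosmetic difference is that you treat both parities of $n$ uniformly where the paper splits into two cases.
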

%%%%%%%%%%%%%%%%%%%%% proof 
\begin{proof}%%case 1
\quad  Case 1 :  \quad \(n\) is even.  Using the formula for the Vandermonde matrix we  see 
\[det\enskip H_n   = \left|
\begin{array}{cccc}
1/t_{1}^{n} & 1/t_{2}^{n} &\dots & 1/t_{n+1}^{n}\\
1/t_{1}^{n-1} & 1/t_{2}^{n-1} &\dots & 1/t_{n+1}^{n-1}\\
\vdots & \vdots  &   & \vdots\\
1 & 1  &\dots & 1
\end{array}
\right|
\left|
\begin{array}{cccc}
t_{1}^{n} & t_{1}^{n-1} &\dots & 1\\
t_{2}^{n} & t_{2}^{n-1} &\dots & 1\\
\vdots & \vdots  &   & \vdots\\
t_{n+1}^{n} & t_{n+1}^{n-1} &\dots & 1
\end{array}
\right|
\]
\[= \prod_{1 \le j < k \le n+1}(\frac 1{t_j}-\frac 1{t_k})\times \prod_{1 \le j < k \le n+1}({t_j}-{t_k})\]
\[= (-1)^{n(n+1)/2}\prod_{1 \le j < k \le n+1}({t_j}-{t_k})^2 \times \frac 1{(t_1t_2\dots t_{n+1})^n}\]
\[= (-1)^{n(n+1)/2}\prod_{1 \le j < k \le n+1}({t_j}-{t_k})^2.\]

On the other hand by (2.16), 
\[Dis( \tilde{g}_{n}) = c_0^ {2n} \times \prod_{1 \le j < k \le n+1}({x_j}-{x_k})^2.\] 
From (2.9) it follows that
\[{x_j}-{x_k} = \frac {2i(t_k-t_j)}{(t_j-1)(t_k-1)}.\] 
Hence
%%%%equation 2.18
\begin{equation} 
Dis( \tilde{g}_{n}) 
=  (-1)^{n(n+1)/2}c_0^ {2n} \times 2^{n(n+1)}\times\prod_{j < k }({t_j}-{t_k})^2/\prod_{j =1 }^{n+1}(1-{t_j})^{2n}.
\end{equation}
Clearly
\[\prod_{j =1 }^{n+1}(1-{t_j}) = \tilde{f}_{n}(1, 1).\]
Replacing  \((T, U) = (1, 1)\)  by  \((X, Y)\)  under the transformation  (2.5), we see that  \(X = -i\) and  \(Y = 0.\)  Hence we have 
%%%%equation 2.19
\begin{equation} 
\tilde{f}_{n}(1, 1) = c_0(-i)^ {n+1}.
\end{equation}
Substituting this into (2.18), we have 
\begin{equation*} 
Dis( \tilde{g}_{n}) 
=  (-1)^{n(n+1)/2}\prod_{j < k }({t_j}-{t_k})^2\times 2^{n(n+1)}/i^{2n(n+1)} = 2^{n(n+1)}det H_n.
\end{equation*}
%%case 2
 Case 2 :  \quad 
\(n\) is odd.  The proof is similar.
\end{proof}
Since \( \tilde{g}_{n}(X, 1)\)  is a real polynomial, its discriminant   \(Dis( \tilde{g}_{n}(X, 1))\) is real.  Thus 
\(det \enskip H_n\)  is also real.  We may regard the space 
\[R_n : = \{(\zeta_1, \dots , \zeta_n) : \zeta_j = \overline{\zeta}_{n+1-j} , \enskip j = 1, \dots , n\}\]
as the space  \(\mathbb{R}^n\).  The set  \(\{det\enskip H_n = 0\} \)  may be seen as a hypersurface in  \(R_n\).
We define a subset   \(W_n\)  of  \(R_n\) by
 \[W_n = \{(\zeta_1, \dots , \zeta_n) : \zeta_j \enskip  \mbox { is the j-th elementary symmetric function in}\]
\[e^{i\theta _1}, \dots ,  e^{i\theta _{n+1}}, \enskip (j = 1, 2, \dots , n) \enskip \mbox{and} \enskip \theta_j \ne \theta_k \enskip \mbox{for}\enskip j \ne k\}.\]
Any element of  \(W_n\) corresponds to the \( \tilde{f}_{n}(T, 1)\)   all the roots of which are on the unit circle \(C\) and distinct.

We consider the n-simplex  \(S_n\) defined by 
\[S_n : = \{(\theta_1, \dots , \theta_n, \theta_{n+1}) :  \sum_{j=1}^{n+1}\theta_j = 0,\]
\[\theta_{n+1} \le  \theta_1 \le \dots   \le  \theta_n \le  2\pi + \theta_{n+1} \}.\]
We define a map \(\phi\)  from  \(S_n\)  to  \(R_n\)  by 
\[\phi(\theta_1 , \theta_2,  \dots, \theta_{n+1})   =  (\zeta_1, \dots , \zeta_n),  \]
where \enskip
\( \zeta_j \)\enskip  is the j-th elementary symmetric function in\enskip
\(e^{i\theta _1}, \dots ,  e^{i\theta _{n+1}}, \enskip\\ (j = 1, 2, \dots , n) .\)  \enskip By  \cite{EL}, we see that \(\phi\) is a diffeomorphism from int\((S_n)\)  onto  \(W_n\)  and \(\partial S_n\)  is mapped into the set \enskip  \(\{det\enskip H_n = 0\} \) .\\
Then the set  \(W_n\) is a connected component in  \(R_n \setminus \{det\enskip H_n = 0\}\)  that contains the origin  \enskip \( (\zeta_1=  \zeta_2 = \dots = \zeta_n = 0)\).

We consider the case that  \({\tilde f}_n(T, 1) \)   has exactly  \(k\) roots on the unit circle  \(C\)  .  Note that 
\(n+1-k\)  is even.
%%%%%%%%%%%%%%%%%%%%%corollary 2.10
\begin{cor} \label{corollary:2.10}
We assume that  \({\tilde f}_n(T, 1) \)  has exactly  \(k\)  roots on \(C\)   and  that all the roots of   \({\tilde f}_n(T, 1) \)  are distinct.  Then
\[sgn \enskip det \enskip H_n = (-1)^{(n+1-k)/2},\]
where \enskip \(sgn \enskip det \enskip H_n \)  denotes the signature of \enskip \(det \enskip H_n .\)
\end{cor}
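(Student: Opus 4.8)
The plan is to establish the sign of $\det H_n$ by connecting it to the discriminant of $\tilde g_n(X,1)$ via Theorem 2.9 and then analyzing how the real-rootedness of the real polynomial $\tilde g_n(X,1)$ is governed by the distribution of the roots $t_j$ of $\tilde f_n(T,1)$ relative to the unit circle. By Theorem 2.9 we have $Dis(\tilde g_n(X,1)) = 2^{n(n+1)}\det H_n$, so since $2^{n(n+1)} > 0$, it suffices to compute $sgn\,Dis(\tilde g_n(X,1))$. The key structural fact I would exploit is Proposition 2.6: under the M\"obius map $x_j = \sqrt{-1}\,(t_j+1)/(t_j-1)$, a root $t_j$ on the unit circle $C$ corresponds to a \emph{real} root $x_j$, while a symmetric pair $t_j = re^{i\theta}$, $t_{j+1} = \frac{1}{r}e^{i\theta}$ with $r\neq 1$ corresponds to a complex-conjugate pair $x_j, x_{j+1} = \overline{x_j}$ of non-real roots.

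First I would record the root structure of $\tilde g_n(X,1)$. Since exactly $k$ of the $t_j$ lie on $C$, the map $\varphi$ produces exactly $k$ real roots $x_j$ among the $x_1,\dots,x_{n+1}$; the remaining $n+1-k$ roots $t_j$ are off $C$ and, because $\tilde f_n \in {\mathcal A}_n$ forces its off-circle roots to occur in symmetric pairs, they split into $(n+1-k)/2$ symmetric pairs (this is why $n+1-k$ is even). Each such symmetric pair yields one conjugate pair of non-real roots $x_j,\overline{x_j}$ of the real polynomial $\tilde g_n(X,1)$. Thus $\tilde g_n(X,1)$ has exactly $(n+1-k)/2$ conjugate pairs of non-real roots and $k$ real roots, all distinct by hypothesis.

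Next I would apply the standard sign rule for the discriminant of a real polynomial: for a real polynomial of degree $N$ with distinct roots and exactly $s$ complex-conjugate pairs, the sign of $\prod_{j<k}(x_j-x_k)^2$ is $(-1)^s$. Here $N = n+1$ and $s = (n+1-k)/2$. Because $Dis(\tilde g_n(X,1)) = c_0^{2n}\prod_{j<k}(x_j-x_k)^2$ with $c_0^{2n} > 0$ (note $c_0 \neq 0$ since $\tilde g_n$ is genuinely of degree $n+1$ in $X$, equivalently $y_j = 1$ is legitimate for all $j$), the sign of the discriminant equals $(-1)^{(n+1-k)/2}$. Combining with Theorem 2.9 and $2^{n(n+1)} > 0$ gives $sgn\,\det H_n = (-1)^{(n+1-k)/2}$, as claimed.

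The main obstacle, and the step deserving the most care, is the clean bookkeeping of the correspondence between off-circle roots $t_j$ and non-real roots $x_j$: one must verify that the $n+1-k$ off-circle roots genuinely pair up into conjugate-symmetric pairs via the self-inversive symmetry $\zeta_j = \overline{\zeta}_{n+1-j}$ (so that no off-circle $t_j$ maps to a real $x_j$, and no on-circle $t_j$ maps to a non-real $x_j$), and that $c_0 \neq 0$ so that the discriminant formula (2.16) applies and the leading factor is strictly positive. The real-polynomial sign rule $sgn\prod_{j<k}(x_j-x_k)^2 = (-1)^s$ is classical, so once the count $s = (n+1-k)/2$ is justified the conclusion is immediate.
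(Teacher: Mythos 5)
Your proposal is correct and follows essentially the same route as the paper: the paper's proof likewise observes that the hypotheses force $\tilde g_n(X,1)$ to have exactly $k$ real roots and $(n+1-k)/2$ conjugate pairs, and then invokes (2.16) and Theorem 2.9 together with the classical sign rule for real discriminants. You simply spell out the bookkeeping (via Proposition 2.6 and the pairing of off-circle roots) that the paper leaves implicit.
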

\begin{proof}\quad
The assumption implies that    \({\tilde g}_n(X, 1) \)   has  \((n+1-k)/2\)  pairs of  complex conjugate roots and \(k\) real  roots .  Then the corollary  follows from (2.16) and Theorem 2.9.
 \end{proof}

In theorem 2.9, we assume that  \(c_0 \ne 0\).  Here we consider this condition.   If  \(c_0 = 0,\) the positive integer \(k\) is uniquely determined by the condition that  \(c_k \ne 0\) and   \(c_j = 0\)   for  \(0 \le j < k.\)  We may consider  \( \tilde{g}_{n}(X, Y)/Y^k.\)   We study this case through the corresponding self- inversive forms.  Let \( \tilde{f}_{n}(T, U)\)  be the corresponding self-inversive form of\enskip  \(\tilde{g}_{n}(X, Y)\)  \enskip under  \(\Phi_{n}^{-1}\).  Then it is equivalent to consider   \( \tilde{f}_{n}(T, U)/(T-U)^k\).  We will see that 
\( \tilde{f}_{n}(T, U)/(T-U)^k \in {\mathcal {MA}}_{n-k}\)  by the following proposition.
%%%%proposition 2.11
 \begin{pro} \label{pro:2.11}
Assume that  \( \tilde{f}_{n}(T, U)\)  is an element of  \({\mathcal {MA}}_n \) and has a root \(T=U\).  Then  \( \tilde{f}_{n}(T, U)/(T-U)\)  is an element of  \({\mathcal {MA}}_{n-1} \).
\end{pro}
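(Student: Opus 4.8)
The plan is to compute the quotient $\tilde{f}_n(T,U)/(T-U)$ explicitly by long division and then verify directly that its coefficients satisfy the two defining conditions of $\mathcal{MA}_{n-1}$: the self-inversive symmetry $\eta_k = \bar\eta_{n-k}$ and the normalization of the extreme coefficients $\eta_0 = \eta_n = 1$.

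First I would write $\tilde{f}_n(T,U) = \sum_{j=0}^{n+1}(-1)^j\zeta_j T^{n+1-j}U^j$ with $\zeta_0 = \zeta_{n+1} = 1$ and $\zeta_k = \bar\zeta_{n+1-k}$, and seek the quotient in the form $\sum_{j=0}^{n} b_j T^{n-j}U^j$. Matching coefficients in the identity $(T-U)\sum_j b_j T^{n-j}U^j = \tilde{f}_n(T,U)$ yields the recursion $b_j - b_{j-1} = (-1)^j\zeta_j$ with $b_{-1}=0$, whence $b_j = \sum_{i=0}^{j}(-1)^i\zeta_i$. The hypothesis that $T=U$ is a root is exactly $\tilde{f}_n(1,1)=\sum_{i=0}^{n+1}(-1)^i\zeta_i = 0$, which guarantees that the division is exact (the would-be coefficient $b_{n+1}$ vanishes) and forces $b_n = (-1)^n$.

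Next I would set $\eta_j := (-1)^j b_j = (-1)^j\sum_{i=0}^{j}(-1)^i\zeta_i$, so that the quotient becomes $\sum_{j=0}^{n}(-1)^j\eta_j T^{n-j}U^j$, the generic shape of an element of $\mathcal{A}_{n-1}$. The normalization is immediate: $\eta_0 = b_0 = \zeta_0 = 1$ and $\eta_n = (-1)^n b_n = 1$. The essential verification is the symmetry $\eta_j = \bar\eta_{n-j}$. Here I would rewrite $\bar\eta_{n-j}$ using $\bar\zeta_i = \zeta_{n+1-i}$, reindex the resulting sum by $m = n+1-i$, and then use the vanishing alternating sum $\sum_{m=0}^{n+1}(-1)^m\zeta_m = 0$ to convert the tail $\sum_{m=j+1}^{n+1}(-1)^m\zeta_m$ into $-\sum_{m=0}^{j}(-1)^m\zeta_m$; after collecting signs this returns exactly $\eta_j$.

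The only delicate part is the sign bookkeeping in this last step, since the parities of $n$ and of the index $j$ both enter, and the powers of $-1$ arising from the alternating coefficients, from the conjugation symmetry, and from the reindexing must be tracked so that they cancel—and it is precisely the divisibility hypothesis that makes them cancel. As a consistency check one may note the root-theoretic picture: writing $\zeta_j$ as the $j$-th elementary symmetric function of $t_1,\dots,t_{n+1}$ with $\prod t_j = 1$, the root $T=U$ corresponds to some $t_{n+1}=1$ on the unit circle $C$, and deleting it leaves $n$ roots with product $1$ whose symmetry about $C$ is undisturbed, so the quotient should again be monic and self-inversive, in agreement with the coefficient computation.
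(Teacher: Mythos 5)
Your proof is correct, and it takes a genuinely different route from the paper's. The paper argues through the roots: it factors $\tilde{f}_n(T,U)=\prod_{j=1}^{n+1}(T-t_jU)$ with $t_1\cdots t_{n+1}=1$, invokes the classical characterization (cited from Marden) that a polynomial is self-inversive exactly when its roots are on or symmetric in the unit circle, strikes out the root $t_1=1$, and observes that the remaining roots still have product $1$ and are still on or symmetric in $C$, which pins down the quotient's extreme coefficients as $1$ and $(-1)^n$. You instead perform the division at the level of coefficients: the recursion $b_j-b_{j-1}=(-1)^j\zeta_j$ gives the closed form $b_j=\sum_{i=0}^{j}(-1)^i\zeta_i$, the hypothesis $\tilde{f}_n(1,1)=0$ is exactly the vanishing of $b_{n+1}$, and the symmetry $\eta_j=\bar\eta_{n-j}$ drops out of the reindexing $m=n+1-i$ combined with that same vanishing alternating sum (I checked the sign bookkeeping: $(-1)^{n-j}(-1)^{n+1-m}$ collapses to $-(-1)^j(-1)^m$, and the two minus signs cancel, so $\bar\eta_{n-j}=\eta_j$ as you claim). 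Your approach is longer but entirely self-contained and elementary: it needs neither the factorization over $\mathbb{C}$ nor the root-location characterization of self-inversive polynomials, and it produces an explicit formula for the quotient's coefficients as partial alternating sums of the $\zeta_i$. The paper's approach is shorter and more conceptual but leans on an external theorem. Your closing ``consistency check'' is in fact precisely the paper's proof.
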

%%%%%%%%%%%%%%%%%%%%% proof 
\begin{proof}\quad We may assume that 
\[\tilde{f}_{n}(T, U) = \prod_{j =1 }^{n+1}(T-{t_jU}) .\]
and  \( t_1t_2 \dots t_{n+1} = 1\).
It is known e.g. in \cite{M}  that a polynomial whose roots are on or symmetric in the unit circle \(C\) is a self-inversive polynomial and that the converse is true.  Hence all the roots of  \( \tilde{f}_{n}(T, U)\) are on or symmetric in \(C\).   Since  \( \tilde{f}_{n}(T, U)\)  has a root  \(T= U\),  we may assume \(t_1  =1\).  Then the roots of  \( \tilde{f}_{n}(T, U)/(T-U)\)  are on or symmetric in \(C\)  and satisfy  \( t_2t_3 \dots t_{n+1} = 1\). So   \( \tilde{f}_{n}(T, U)/(T-U)\) is self-inversive and the coefficients of \(T^n\) and \(U^n\)   are 1 and  \((-1)^n\), respectively.    Hence  \( \tilde{f}_{n}(T, U)/(T-U) \in {\mathcal {MA}}_{n-1}\).
\end{proof}
We note a remark.  In the proof of Lemma 2.8, we replace \(\zeta_0Z^{n+1}\)  by \((Z')^{n+1}\)  to make \(\zeta_0 = 1\).  If we use another replacement \((Z'')^{n+1}\) of  \(\zeta_0Z^{n+1}\),  then \enskip \(Z'' = \omega  Z'\)  where \enskip  \(\omega ^{n+1} = 1\).  The determinant \(det \enskip H_n\) does not change under this new replacement.
%%%%%%%%%%%%%%%%%%%%%%%%%%%%%%%%%%%%%%%%%%%%%%%%%%%%%%%% 2.4
\subsection{Discriminants of monic self-reciprocal polynomials in \({\mathcal B}_n \)}\hspace{0cm}

In the section we assume that \(n\) is even.  We consider  monic self-reciprocal polynomials in \({\mathcal B}_n \) and their discriminants.  Let  \({\mathcal {MB}}_n \)  denote  self-reciprocal forms that are written in (2.10) satisfying  \(\zeta_0 = \zeta_{n+1} = 1.\)    The image of \({\mathcal {MB}}_n \) under 
 \(\Psi_{n}\) is denoted by \(\mathcal {MF'}_n \).  Any element of \({\mathcal {MF'}}_n \)  is written as
%%%%equation 2.20
\begin{equation} 
\begin{split}
 \tilde{q}_{n}(X, Y) = d_0X^{n+1} + d_1X^{n}Y +  \cdot \cdot \cdot + d_nXY^{n} + d_{n+1}Y^{n+1}, \\
d_j \in {\mathbb R}, \quad j = 0, 1, \dots, n+1.
\end{split}
\end{equation}

To define the discriminant of \( \tilde{q}_{n}(X, 1)\) we assume that  \(d_0 \ne 0\).  We define the discriminant by

%%%%equation 2.21 
\begin{equation}
Dis(\tilde{q}_{n}(X, 1)) = d_0^{2n} \mathop{\Pi}  _{1 \le j < k \le n+1}(x'_j - x'_k)^2,
\end{equation}
where  \(x'_1, x'_2, \dots , x'_{n+1}\)  are roots of \(\tilde{ q}_{n}(X, 1)\).  

We consider monic self-reciprocal  polynomials in  \({\mathcal B}_n \).  We denote the image of  \( \tilde{q}_{n}(X, Y)\)   in  (2.20) under the map \(\Psi_n^{-1}\) by 
%%%%equation 2.22
\begin{equation} 
 \tilde{p}_{n}(T, U) = T^{n+1} + \zeta_1T^{n}U + \zeta_2T^{n-1}U^2 + \cdot \cdot \cdot + \zeta_nTU^{n} + U^{n+1}.
\end{equation}
We consider the roots of  \( \tilde{p}_{n}\)  and \( \tilde{q}_{n}\).  We use the notation in (2.13).  In this case we may set   \(u'_j = y'_j = 1\)  for   \(j =1, \dots , n+1\).     Hence  \(x'_1, x'_2, \dots , x'_{n+1}\)  are roots of \( \tilde{q}_{n}(X, 1)\)  and   \(t'_1, t'_2, \dots, t'_{n+1} \) are roots of   \( \tilde{p}_{n}(T, 1)\).

  The coefficient  \(\zeta_j\) is  \((-1)^j\) times the \(j\)-th elementary symmetric function in \(t'_1,\dots, t'_{n+1}\). Clearly  \(t'_1,\dots, t'_{n+1} = -1\).   We define the \(m\)-th power \(k_m(m \in {\mathbb Z})\)  by
\[k_m(\zeta_1, \dots , \zeta_n) :  =  \sum_{j=1}^{n+1}(t'_j)^m.\]
Clearly
\[k_m((-1)\zeta_1,  (-1)^2\zeta_{2} , \dots , (-1)^n\zeta_n) = h_m(\zeta_1, \zeta_2, \dots , \zeta_n), \enskip \mbox{for} \enskip m \geq 0.\]

We define a  \((n+1)\times(n+1)\)  matrix  \(K_n\)  by
\[K_n :  = \left(
\begin{array}{cccc}
k_0 & k_{-1} &\dots & k_{-n}\\
k_1 & k_{0} &\dots & k_{-n+1}\\
\vdots &   &\ddots & \vdots\\
k_n & k_{n-1} &\dots & k_{0}
\end{array}
\right).\]
In our case
\[\zeta_j = \overline{\zeta}_{n+1-j} , \enskip j = 1, \dots , n.\]
%%%%%%%%%%%%%%%%%%%%%%%%%%%%%%%theorem 2.12
\begin{theorem} \label{theorem:2.12}
Under the above notations we have 
\[Dis( \tilde{q}_{n}(X, 1)) =  2^{n(n+1)}det\enskip H_n.\] 
\end{theorem}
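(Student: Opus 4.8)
The plan is to run the argument of Theorem 2.9 essentially verbatim, with the roots $t'_j$ of $\tilde{p}_n(T,1)$ playing the role of the $t_j$, and then to supply the one genuinely new ingredient: the identification of $\det H_n$ with a product over the $t'_j$. Throughout $n$ is even and I write $N:=n(n+1)/2$ for the number of pairs $1\le j<k\le n+1$.

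First I would express the discriminant through the $t'_j$. Since we may take $u'_j=y'_j=1$, formula (2.15) gives $x'_j=\sqrt{-1}\,(t'_j+1)/(t'_j-1)$, whence, exactly as in the computation following (2.18),
\[
x'_j-x'_k=\frac{2i(t'_k-t'_j)}{(t'_j-1)(t'_k-1)},\qquad (x'_j-x'_k)^2=\frac{-4\,(t'_j-t'_k)^2}{(t'_j-1)^2(t'_k-1)^2}.
\]
Multiplying over the $N$ pairs, using $(-4)^N=(-1)^{n(n+1)/2}2^{n(n+1)}$, and observing that each factor $(t'_\ell-1)$ occurs in exactly $n$ pairs, definition (2.21) becomes
\[
Dis(\tilde{q}_n(X,1))=(-1)^{n(n+1)/2}\,2^{n(n+1)}\,d_0^{\,2n}\,\frac{\prod_{j<k}(t'_j-t'_k)^2}{\prod_{\ell=1}^{n+1}(t'_\ell-1)^{2n}}.
\]

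Next I would evaluate the denominator. Because $\tilde{p}_n(T,1)=\prod_\ell(T-t'_\ell)$ we have $\prod_\ell(1-t'_\ell)=\tilde{p}_n(1,1)$, and under the substitution (2.11) the point $(T,U)=(1,1)$ corresponds to $(X,Y)=(1,0)$, so $\tilde{p}_n(1,1)=\tilde{q}_n(1,0)=d_0$. Hence $\prod_\ell(t'_\ell-1)^{2n}=\big((-1)^{n+1}d_0\big)^{2n}=d_0^{\,2n}$, the even power killing the sign, and the factor $d_0^{2n}$ cancels to leave
\[
Dis(\tilde{q}_n(X,1))=(-1)^{n(n+1)/2}\,2^{n(n+1)}\prod_{1\le j<k\le n+1}(t'_j-t'_k)^2.
\]
It then remains to prove $\det H_n=(-1)^{n(n+1)/2}\prod_{j<k}(t'_j-t'_k)^2$, which is the step not covered by Theorem 2.9, since the present theorem is stated with $H_n$ rather than with the matrix $K_n$ that arises naturally here. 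I would introduce $s_j:=-t'_j$; a direct computation gives $-\tilde{p}_n(-T,1)=\prod_\ell(T-s_\ell)$, the monic polynomial with elementary symmetric functions $\zeta_1,\dots,\zeta_n$ and $\prod_\ell s_\ell=1$. Thus the $s_j$ are exactly the roots that define the power sums $h_m=\sum_\ell s_\ell^{\,m}$ entering $H_n$ at the present values of $\zeta$, so the Vandermonde factorisation of Theorem 2.9 applies word for word and yields $\det H_n=(-1)^{n(n+1)/2}\prod_{j<k}(s_j-s_k)^2$. Since $(s_j-s_k)^2=(t'_j-t'_k)^2$, combining with the previous display gives $Dis(\tilde{q}_n(X,1))=2^{n(n+1)}\det H_n$.

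The only real obstacle is this last paragraph: one must recognise that the natural object here is $K_n$, built from $k_m=\sum_\ell(t'_\ell)^m$, and that the sign twist $s_j=-t'_j$ — equivalently the identity $k_m=(-1)^m h_m$, which conjugates $K_n=DH_nD$ by $D=\mathrm{diag}(1,-1,\dots,(-1)^n)$ and therefore leaves the determinant unchanged — is precisely what converts the $\mathcal{B}_n$ root data into the $\mathcal{A}_n$ root data and replaces $\det K_n$ by $\det H_n$. Everything else is the computation of Theorem 2.9 repeated for even $n$, so I would phrase much of the write-up by reference to that proof.
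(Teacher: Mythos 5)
Your proposal is correct and follows essentially the same route as the paper: repeat the Theorem 2.9 computation for the roots $t'_j$, evaluate $\tilde{p}_n(1,1)=d_0$ via the substitution (2.11), and use the sign flip $t'_j\mapsto -t'_j$ (equivalently $-\tilde{p}_n(-T,U)=\tilde{f}_n(T,U)$, or the conjugation $K_n=DH_nD$) to pass from the natural matrix $K_n$ to $H_n$. Your write-up is in fact somewhat more explicit than the paper's, which compresses these steps into a reference to the proof of Theorem 2.9.
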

\begin{proof}
The proof is essentially the same as that of Theorem 2.9.  For any monic self-reciprocal polynomial \( \tilde{p}_{n}\) in (2.22) ,   we have \(\tilde{p}_{n}(1, 1) = d_0\),  by (2.11) .  This corresponds to \(\tilde{f}_{n}(1, 1) = c_0(-i)^ {n+1}\)  in (2.19).
Since  \(n\) is even,  it follows that \enskip  \((t'_1,  \dots,  t'_{n+1} )^n = 1\).

Then we have 
\[Dis( \tilde{q}_{n}) =  2^{n(n+1)}det\enskip K_n.\] 
Clearly

\[- \tilde{p}_{n}(-T, U) = \tilde{f}_{n}(T, U).\]
Hence by (2.8) and (2.13) we have with a suitable numbering  
\[t'_j = -t_j, \enskip j = 1, 2, \dots, n+1.\]
Then  \[(t'_j - t'_k)^2  =  ( t_{j}-t_k )^2.\]

Therefore by the proof of Theorem 2.9 we have
\[det \enskip H_n = det \enskip K_n.\]
\end{proof}

%%%%%%%%%%%%%%%%%%%%%%%%%%%%%%%%%%%%%%%%%%%%%%%%%%%%%%%%%%%%%%%%%%%%%%%%%%%%%%%%%%%%%%%%%%%%%%%%%%%%%%%%

\bibliographystyle{amsplain}

\end{document}